\documentclass[10pt]{article}
\usepackage{amsmath,amssymb,wasysym,amsfonts,amscd,amsthm}
\usepackage{subfig,wrapfig}
\usepackage{graphicx}
\usepackage{epstopdf}
\usepackage{fullpage}
\bibliographystyle{plain}

\newtheorem{theorem}{Theorem}[section]
\newtheorem{lemma}[theorem]{Lemma}

\newtheorem{definition}[theorem]{Definition}
\newtheorem{question}[theorem]{Question}
\newtheorem{corollary}[theorem]{Corollary}

\newtheorem{claim}[theorem]{Claim}
\newtheorem{problem}[theorem]{Problem}

\newcommand{\size}{\operatorname{size}}

\title{Sum list coloring, the sum choice number, and sc-greedy graphs}
\author{Michelle Lastrina \and Michael Young}
\date{May 6, 2013}

\begin{document}
\maketitle

\begin{abstract}
Let $G=(V,E)$ be a graph and let $f$ be a function that assigns list sizes to the vertices of $G$.  It is said that $G$ is $f$-choosable if for every assignment of lists of colors to the vertices of $G$ for which the list sizes agree with $f$, there exists a proper coloring of $G$ from the lists.  The sum choice number is the minimum of the sum of list sizes for $f$ over all choosable functions $f$ for $G$.  The sum choice number of a graph is always at most the sum $|V|+|E|$.  When the sum choice number of $G$ is equal to this upper bound, $G$ is said to be sc-greedy.  In this paper, we determine the sum choice number of all graphs on five vertices, show that trees of cycles are sc-greedy, and present some new general results about sum list coloring.
\end{abstract}


\section{Introduction}
Let $G=(V,E)$ be a graph on $n$ vertices and $f:V\rightarrow\mathbb{N}$ be a \textit{size function} that assigns to each vertex of $G$ a list size.  Let an \textit{$f$-assignment} $L:V\rightarrow2^{\mathbb{N}}$ be an assignment of lists of colors to the vertices of $G$ such that $|L(v)|=f(v)$ for all $v\in V$.  The graph $G$ is said to be \textit{$f$-choosable} if every $f$-assignment is $L$-colorable.  A choosable size function is called a \textit{choice function}.  For a choice function $f$, define $\size(f)=\sum_{v\in V}f(v)$.  The \textit{sum choice number} $\chi_{SC}(G)$ is the minimum of $\size(f)$ over all choice functions for $G$.
It is not hard to see that for any graph $G$, $\chi_{SC}(G) \le |V(G)| + |E(G)|$.  This upper bound is provided by a greedy coloring, see Lemma \ref{GB} for a proof of this result.  When equality holds in the previous inequality,
$G$ is said to be \textit{sc-greedy}.  The sum $|V(G)| + |E(G)|$ is called the \textit{greedy bound} and denoted by $GB(G)$, or $GB$ when $G$ is implied.
\begin{lemma}\label{GB}
For any graph $G$, $\chi_{SC}(G) \le |V(G)| + |E(G)|.$
\end{lemma}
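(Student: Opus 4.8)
The plan is to exhibit a single size function $f$ with $\size(f)=|V(G)|+|E(G)|$ and show that $G$ is $f$-choosable; this immediately gives $\chi_{SC}(G)\le\size(f)=|V(G)|+|E(G)|$. The natural choice is to fix an arbitrary ordering $v_1,v_2,\dots,v_n$ of the vertices and set $f(v_i)=1+|\{\,j<i : v_jv_i\in E\,\}|$, the number of neighbors of $v_i$ that precede it, plus one. Summing over all vertices, each edge is counted exactly once (by its later endpoint) and each vertex contributes the additive $1$, so $\size(f)=|V(G)|+|E(G)|$, as required.

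The heart of the argument is to verify that this $f$ is indeed a choice function, i.e.\ that $G$ is $f$-choosable. First I would take an arbitrary $f$-assignment $L$, so that $|L(v_i)|=f(v_i)$ for every $i$. The plan is to color the vertices greedily in the order $v_1,\dots,v_n$: having properly colored $v_1,\dots,v_{i-1}$, I color $v_i$ with any color in $L(v_i)$ that avoids the colors already assigned to its earlier neighbors. The key counting step is that the number of already-colored neighbors of $v_i$ is exactly $|\{\,j<i:v_jv_i\in E\,\}|=f(v_i)-1$, so those neighbors forbid at most $f(v_i)-1$ colors from the list $L(v_i)$ of size $f(v_i)$. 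Hence at least one admissible color always remains, and the greedy procedure never gets stuck.

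I would then observe that proceeding through all $n$ vertices produces a proper coloring of $G$ drawn from the lists $L$, since at each step the chosen color differs from those of all earlier neighbors, and every edge has a later endpoint at which the conflict is resolved. Because $L$ was an arbitrary $f$-assignment, $G$ is $f$-choosable, so $f$ is a choice function and $\chi_{SC}(G)\le\size(f)=|V(G)|+|E(G)|$.

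I do not anticipate a genuine obstacle here, as the result is a standard greedy bound; the only point requiring care is the bookkeeping that shows $\size(f)$ equals $|V(G)|+|E(G)|$ exactly (each edge counted once, not twice) and that the count of previously colored neighbors matches $f(v_i)-1$ precisely, so that the list is always strictly larger than the set of forbidden colors. A minor subtlety worth stating explicitly is that the argument is independent of the chosen vertex ordering, which reassures us that the bound holds for every graph regardless of structure.
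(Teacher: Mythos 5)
Your proposal is correct and follows exactly the paper's approach: the same size function $f(v_i)=1+|\{j<i: v_jv_i\in E\}|$ from an arbitrary vertex ordering, with a greedy coloring showing $f$ is a choice function. You simply spell out the counting details (each edge counted once by its later endpoint, at most $f(v_i)-1$ forbidden colors at each step) that the paper leaves implicit.
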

\begin{proof}
Let $v_1,\ldots,v_n$ be an ordering of the vertices.  Let $f(v_i) = 1 + |\{v_j:j<i\text{ and } v_iv_j\in E(G)\}|$.  Using this ordering, a greedy coloring from arbitrary lists of the prescribed sizes will give a proper coloring for any such list assignment.
\end{proof}

Observe that list coloring and the list chromatic number, or choice number, $\chi_l(G)$ are related to sum list coloring and the sum choice number.  In sum list coloring, the list sizes are allowed to vary and one seeks to minimize the sum of list sizes over all vertices.  With list coloring, one tries to minimize the sizes of lists $L$ of colors assigned to the vertices of a graph such that the graph is $L$-colorable.  The choice number $\chi_l(G)$ is the minimum integer $k$ for which a graph $G$ is $L$-colorable given $|L(v)| = k$ for all $v\in V(G)$.  It is not hard to see that $\chi_{SC}(G)/n \le \chi_l(G)$.  Moreover, for some graphs $G$, it is the case that $\chi_{SC}(G)/n$ is significantly smaller than $\chi_l(G)$.  In \cite{FK09}, F\"{u}redi and Kantor showed that there is an infinite family of complete bipartite graphs such that $\chi_{SC}(G)/n$ can be bounded while the minimum degree increases without bound.  This is not the case for the list chromatic number $\chi_l(G)$.  Specifically, it is observed that a choosable function can be found for complete bipartite graphs such that the average list size does not necessarily grow with the average degree.

To show that $\chi_{SC}(G) = m$, one must provide a choice function $f$ of size $m$ for $G$ and show that for each size function $g$ of size $m-1$, there is a $g$-assignment $L$ for which $G$ is not $L$-colorable.  In other words, if $\size(g)=m-1$, then $g$ is not a choice function for $G$.
This paper is devoted to showing that certain graphs made up of cycles are sc-greedy and computing the sum choice number of all graphs on five vertices.  Additionally, we provide examples of certain graphs that illustrate some general results and determine information about the sum choice number of other graphs.

Sum list coloring was introduced by Isaak \cite{Isaak02,Isaak04} in 2002.  It is a fairly new topic in graph theory, so there is much to be discovered.  For more on sum list coloring see also \cite{Heinold06,Heinold07,Heinold09,BBBD06,FK09}.  In particular, \cite{Heinold07} is a survey of all sum list coloring results up to 2007.  We note here that some of the results in this paper can also be found in \cite{Lastrina12}.  In the next section, we will further discuss Heinold's survey and present some known results about sum list coloring.  First, we define some of the graphs that will be used throughout this paper.

\subsection{Definitions of graphs}
Let $P_n$ denote the path on $n$ vertices and $C_n$ denote the cycle on $n$ vertices.
The graph $W_k$ is the \textit{wheel} on $k+1$ vertices $v_0,v_1,\ldots,v_k$ formed by taking the cycle $v_1v_2\ldots v_kv_1$ and adding the vertex $v_0$ so that it is adjacent to each of the vertices in the $k$-cycle.  The graph $BW_k$ is the \textit{broken wheel} on $k+1$ vertices $v_0,v_1,\ldots,v_k$ which is isomorphic to the graph $W_k - v_1v_k$, the wheel on $k + 1$ vertices without an edge between $v_1$ and $v_k$.  Note that this graph is often referred to as the fan graph $F_k$ on $k + 1$ vertices.  See Figure \ref{fig:BWandW} for examples of $W_k$ and $BW_k$.
\begin{figure}[h]
\begin{center}
\subfloat[][$W_8$]{\label{fig:W8}\includegraphics{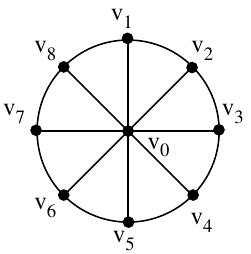}}
\qquad
\subfloat[][$BW_4$]{\label{fig:BW4}\includegraphics{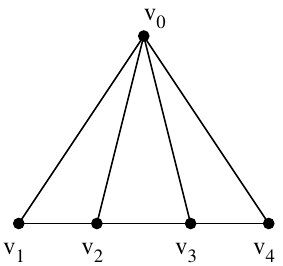}}
\end{center}
\caption{Examples of a wheel and broken wheel.}
\label{fig:BWandW}
\end{figure}

\begin{definition}\label{def:prod}
The \textbf{Cartesian product} $G\times H$ is the graph with vertex set $V(G)\times V(H)$ where any two vertices $(u,u')$ and $(v,v')$ are adjacent in $G\times H$ if and only if either (1) $u=v$ and $u'$ is adjacent to $v'$ in $H$, or (2) $u'= v'$ and $u$ is adjacent to $v$ in $G$.
\end{definition}

\begin{definition}
The \textbf{theta graph} $\Theta_{k_1,k_2,k_3}$ is the union of three internally disjoint paths with $k_1,k_2,k_3$ internal vertices, respectively, that have the same two distinct end vertices.
\end{definition}

\begin{definition}
The graph $G^k$, the \textbf{$k$th power} of a graph $G$, is the graph with the same vertex set as $G$ and an edge between vertices $u$ and $v$ if and only if there is a path of length at most $k$ between $u$ and $v$ in $G$.
\end{definition}

We define the following graph that is obtained by laying cycles of arbitrary and varying lengths greater than three end to end so that they share an edge.
\begin{definition}
A graph $G$ is called a \textbf{path of $k$ cycles}, or \textbf{path of cycles}, if $G=\bigcup_{i=1}^k G_i$ where 
\begin{enumerate}
\item each $G_i$ is a cycle of length $a_i\ge4$ for $i=1,\ldots,k$,
\item $V(G_{i-1})\cap V(G_i)=\{t_i,b_i\}$ for all $i=2,\ldots,k$,
\item $E(G_{i-1})\cap E(G_i)=\{t_ib_i\}$ for all $i=2,\ldots,k$,
\item $t_i,b_i\not\in V(G_j)$ for all $j\ne i-1,i$
\item If $w_i\in V(G_i)-\{t_i,b_i,t_{i+1},b_{i+1}\}$, then $w_i\not\in V(G_j)$ for $j\ne i$.
\item $G$ can be drawn in the plane so that the weak dual of $G$ is a path.
\end{enumerate}
\end{definition}
See Figure \ref{fig:cyclepath} for an example of a path of cycles.
\begin{figure}[h]
\begin{center}
\includegraphics{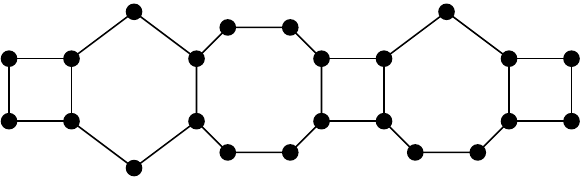}
\caption{An example of a path of cycles.}
\label{fig:cyclepath}
\end{center}
\end{figure}

We also define a graph that is obtained by laying cycles of arbitrary and varying lengths greater than three along a special tree-like structure so that they share an edge.
\begin{definition}
A graph $G$ is called a \textbf{tree of $k$ cycles}, or \textbf{tree of cycles}, if $G=\bigcup_{i=1}^k G_i$ where 
\begin{enumerate}
\item each $G_i$ is a cycle of length $a_i\ge4$ for $i=1,\ldots,k$
\item for all pairs $i,j$, it must be that $V(G_i)\cap V(G_j)=\emptyset$ or $V(G_i)\cap V(G_j)=\{u,v\}$ for two adjacent vertices $u,v\in V(G)$
\item If $V(G_i)\cap V(G_j)=\{u,v\}$, then $u,v\not\in V(G_l)$ for all $l\ne i,j$.
\item If $w_i\in V(G_i)$, then $w_i\not\in V(G_j)$ for all $j\ne i$ unless $V(G_i)\cap V(G_j)=\{u,v\}$ and $w_i\in\{u,v\}$.
\item $G$ can be drawn in the plane so that the weak dual of $G$ is a tree.
\item $G$ can be drawn in the plane so that in the dual of $G$, the vertex corresponding to the unbounded face of $G$ is adjacent to all other vertices.
\end{enumerate}
\end{definition}
See Figure \ref{fig:cycletree} for an example of a tree of cycles.
\begin{figure}[h]
\begin{center}
\includegraphics{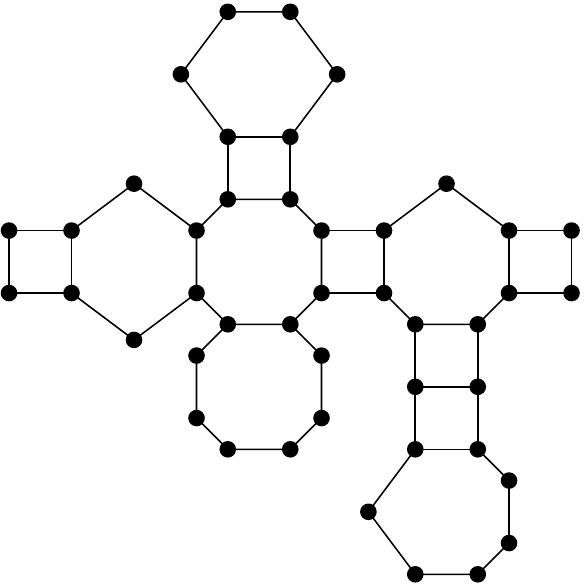}
\caption{An example of a tree of cycles.}
\label{fig:cycletree}
\end{center}
\end{figure}
Note that a path of cycles is a special case of a tree of cycles that occurs when the underlying tree-like structure is a path.

\section{Background and preliminaries}
A survey by Heinold \cite{Heinold07}, and a more recent unpublished version from 2011, compiles results about sum list coloring, the sum choice number, and states open problems in the area.  Specifically, it contains a summary of graphs that are known to be sc-greedy, as well as other graphs whose sum choice number is known.
These graphs, as well as previous results included in the survey that will be used in this paper, are stated here for convenience.
Note that the number of graphs known to be sc-greedy is not large, nor is the number of graphs whose sum choice number is known.

The following graphs are known to be sc-greedy: complete graphs \cite{Isaak04}, paths, cycles, trees, and cycles with pendant paths.  Also, the Petersen graph is sc-greedy, as well as $P_2\times P_n$ and the theta graph $\Theta_{k_1,k_2,k_3}$ unless $k_1=k_2=1$ and $k_3$ is odd \cite{Heinold06}.
Paths of cycles are sc-greedy.
While this result is stated without proof in \cite{Heinold07} and elsewhere, this paper contains a proof of the more general result that certain trees of cycles are sc-greedy.
The graph $P_n^2$ is also sc-greedy \cite{Heinold06}.
Additional graphs whose sum choice number is known are $P_3\times P_n$ \cite{Heinold06}, $K_{2,n}$ \cite{BBBD06}, $K_{3,n}$ \cite{Heinold06}, $K_2\times K_n$ \cite{Isaak02}, and $K_3\times K_3$ \cite{BBBD06}.  See Table \ref{tab:sc} for more information.  In most cases, these graphs are not sc-greedy.

The graph $K_{2,3}$ is the smallest graph which is not sc-greedy.  Note that $K_{2,3}$ is a graph on five vertices and all graphs on at most four vertices are sc-greedy.  There exist other graphs on five vertices that are not sc-greedy.  These will be presented in Section \ref{sec:five}.

\renewcommand{\arraystretch}{1.25}
\begin{table}
\begin{center}
\caption{Graphs that are not generally sc-greedy whose sum choice number is known \cite{BBBD06,Isaak02,Heinold06}.}
\label{tab:sc}
\begin{tabular}{|l|c|c|l|}
  \hline
  $G$  &  $\chi_{SC}(G)$  &  $GB(G)$  &  sc-greedy? \\ \hline
  $P_3\times P_n$  &  $8n-3-\left\lfloor\frac{n}{3}\right\rfloor$  &  $8n-3$  &  $n=1,2$ \\ \hline
  $K_{2,n}$  &  $2n+1+\left\lfloor\sqrt{4n+1}\right\rfloor$  &  $3n+2$  &  $n=1,2$ \\ \hline
  $K_{3,n}$  &  $2n+1+\left\lfloor\sqrt{12n+4}\right\rfloor$  &  $4n+3$  &  $n=1$ \\ \hline
  $K_2\times K_n$  &  $n^2+\left\lceil\frac{5n}{3}\right\rceil$  &  $n^2+2n$  &  $n=1,2$ \\ \hline
  $K_3\times K_3$  &  $25$  &  $27$  &  no \\ \hline
  $\Theta_{1,1,2k+1}$  &  $4k+10$  &  $4k+11$  &  no \\
  \hline
\end{tabular}
\end{center}
\end{table}

Given these known results, there are some related graphs for which it would be natural to determine their sum choice number.  Let a \textit{generalized theta graph} be defined analogously to theta graphs except they may have more than three internally disjoint paths.  Then, in the future, we would like to answer the following questions.

\begin{question}
What is the sum choice number of a generalized theta graph?
\end{question}

\begin{question}
What is the sum choice number of $P_4\times P_4$?
\end{question}

A \textit{non-simple size function} is a function $f$ for which $2\le f(v)\le\deg(v)$ for all $v\in V(G)$.  Otherwise, $f$ is a \textit{simple size function}, i.e. $f(v)=1$ or $f(v)>\deg(v)$ for some $v\in V(G)$.  Answers to questions regarding sum choosability for simple size functions of a graph $G$ can be determined by answering sum choosability questions about $G-v$.  This section contains some known results that illustrate this idea.

What follows are some previous results that can be used to show a graph is sc-greedy.
Let $f$ be a size function, then $f^v$ is the size function assigned to $G - v$ where $f^v(w) = f(w) - 1$ if $vw\in E(G)$ and $f^v(w)=f(w)$ otherwise.  For a subgraph $H$ of $G$, let $f_H$ be the size function restricted to $H$.

\begin{lemma}[Isaak \cite{Isaak04}]\label{minus}
Let $G$ be a graph and $f$ a size function for $G$.
\begin{enumerate}
\item If $f(v)=1$ for some $v\in V(G)$, then $G$ is $f$-choosable if and only if $G-v$ is $f^v$-choosable.
\item If $f(v)>\deg(v)$ for some $v\in V(G)$, then $G$ is $f$-choosable if and only if $G-v$ is $f_{G-v}$-choosable.
\end{enumerate}
\end{lemma}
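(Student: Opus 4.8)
The plan is to prove each equivalence by transferring colorings and list assignments between $G$ and $G-v$, treating the two statements separately since they rest on different mechanisms: part~(1) exploits the single forced color at a list-size-$1$ vertex, while part~(2) exploits a pigeonhole count at a vertex whose list outnumbers its neighbors.

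For part~(1), where $f(v)=1$, I would relate $f$-assignments on $G$ to $f^v$-assignments on $G-v$ through the unique color available at $v$. For the forward direction, given an arbitrary $f^v$-assignment $L'$ on $G-v$, I would build an $f$-assignment $L$ on $G$ by setting $L(v)=\{c\}$ for a brand-new color $c$ occurring in none of the lists $L'$, setting $L(w)=L'(w)\cup\{c\}$ for each neighbor $w$ of $v$ (which raises the size from $f^v(w)$ to $f^v(w)+1=f(w)$ precisely because $c$ is fresh), and $L(w)=L'(w)$ otherwise. Any proper $L$-coloring $\phi$ of $G$ must set $\phi(v)=c$, so no neighbor of $v$ receives $c$; restricting $\phi$ to $G-v$ then lands in the lists $L'$ and is a proper $L'$-coloring. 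For the converse, given an $f$-assignment $L$ on $G$, I would color $v$ with its unique available color $c$, delete $c$ from the neighbors' lists, and trim each list down to size exactly $f^v$, obtaining an $f^v$-assignment on $G-v$; a proper coloring from these trimmed lists avoids $c$ on the neighbors of $v$ and hence combines with the color $c$ at $v$ into a proper $L$-coloring of $G$.

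For part~(2), where $f(v)>\deg(v)$, the forward direction is essentially immediate: given an $f_{G-v}$-assignment $L'$ on $G-v$, extend it to an $f$-assignment on $G$ by giving $v$ any list of size $f(v)$, color $G$ properly by hypothesis, and restrict the coloring to $G-v$. The converse is where the degree condition is used: given an $f$-assignment $L$ on $G$, restrict it to $G-v$, apply $f_{G-v}$-choosability to obtain a proper coloring there, and observe that the $\deg(v)$ neighbors of $v$ forbid at most $\deg(v)$ colors, strictly fewer than $|L(v)|=f(v)$. By pigeonhole at least one color of $L(v)$ remains available for $v$, completing a proper $L$-coloring of $G$.

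I expect the only delicate bookkeeping to occur in part~(1): in the forward direction one must insist that $c$ is a genuinely new color so that each neighbor's list grows by exactly one, and in the converse one must remember to trim the neighbors' lists back down to the exact sizes prescribed by $f^v$, since deleting $c$ removes nothing when $c\notin L(w)$. Both points are routine once noted. Part~(2) presents no real obstacle beyond the pigeonhole count, which is exactly where the hypothesis $f(v)>\deg(v)$ enters.
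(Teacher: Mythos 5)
Your proof is correct. Note that the paper states Lemma \ref{minus} without proof, citing Isaak \cite{Isaak04}, so there is no in-paper argument to compare against; your two-direction list-transfer argument --- a fresh color $c$ forced at $v$ with lists padded by $c$ on the neighbors in part (1), and the pigeonhole count using $f(v)>\deg(v)$ in part (2) --- is the standard proof of this result, and the two bookkeeping points you flag (freshness of $c$ so list sizes come out exactly right, and trimming neighbor lists down to size $f^v(w)$ when $c\notin L(w)$) are exactly the details that need care and are handled correctly.
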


Let $\tau(G)$ and $\rho(G)$ be defined as follows:
\begin{eqnarray*}
  \tau(G)  &=&  \min\{\size(f):G\text{ is }f\text{-choosable and }2\le f(v)\le\deg(v)\,\forall v\in V(G)\}, \\
  \rho(G)  &=&  \min\{\chi_{SC}(G-v)+\deg(v)+1:v\in V(G)\}.
\end{eqnarray*}

\begin{lemma}[Heinold \cite{Heinold06,Heinold07}]\label{greedy}
For all graphs $G$, $\chi_{SC}(G)=\min\{\rho(G),\tau(G)\}$.  In particular, if $G-v$ is sc-greedy for all $v\in V(G)$, then $\chi_{SC}(G)=\min\{GB(G),\tau(G)\}$.
\end{lemma}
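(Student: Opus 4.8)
The plan is to establish the identity by proving the two inequalities $\chi_{SC}(G)\le\min\{\rho(G),\tau(G)\}$ and $\chi_{SC}(G)\ge\min\{\rho(G),\tau(G)\}$ separately, using the observation that the definitions of $\rho$ and $\tau$ correspond precisely to the dichotomy between \emph{simple} and \emph{non-simple} size functions, and that Isaak's Lemma~\ref{minus} is exactly the tool that converts a simple vertex into a statement about $G-v$.

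For the upper bound, I would argue the two parts in turn. Any non-simple size function $f$ for which $G$ is $f$-choosable is itself a choice function, so $\chi_{SC}(G)\le\size(f)$; taking the minimum over all such $f$ gives $\chi_{SC}(G)\le\tau(G)$. For $\chi_{SC}(G)\le\rho(G)$, I would fix a vertex $v$, take an optimal choice function $g$ on $G-v$, and extend it to $G$ by setting $f(v)=\deg(v)+1$ and $f(w)=g(w)$ for $w\ne v$. Since $f(v)>\deg(v)$, part~(2) of Lemma~\ref{minus} guarantees that $G$ is $f$-choosable if and only if $G-v$ is $g$-choosable, which it is; hence $f$ is a choice function of size $\chi_{SC}(G-v)+\deg(v)+1$. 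Minimizing over $v$ yields $\chi_{SC}(G)\le\rho(G)$.

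For the lower bound, I would take an optimal choice function $f$ with $\size(f)=\chi_{SC}(G)$. Because every vertex must receive a nonempty list, $f(v)\ge1$ for all $v$, so $f$ is either non-simple---in which case $\size(f)\ge\tau(G)$ immediately---or simple, meaning some vertex $v$ has $f(v)=1$ or $f(v)>\deg(v)$. In the simple case, the relevant part of Lemma~\ref{minus} shows that the induced size function on $G-v$ (namely $f^v$ when $f(v)=1$, and $f_{G-v}$ when $f(v)>\deg(v)$) is a choice function for $G-v$. Thus $\chi_{SC}(G-v)+\deg(v)+1\le\size(f)$, giving $\size(f)\ge\rho(G)$, and combining the two cases yields $\chi_{SC}(G)\ge\min\{\rho(G),\tau(G)\}$.

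Finally, for the ``in particular'' statement, I would specialize $\rho$. If $G-v$ is sc-greedy for every $v$, then $\chi_{SC}(G-v)=GB(G-v)=|V(G)|-1+|E(G)|-\deg(v)=GB(G)-\deg(v)-1$, since removing $v$ deletes one vertex and its $\deg(v)$ incident edges; substituting, each term $\chi_{SC}(G-v)+\deg(v)+1$ collapses to $GB(G)$, so $\rho(G)=GB(G)$ and the formula becomes $\chi_{SC}(G)=\min\{GB(G),\tau(G)\}$. The one point requiring care is the size accounting in the simple case: one must verify that in \emph{both} branches of Lemma~\ref{minus} the quantity $\size$ drops by exactly $\deg(v)+1$---deleting a vertex of list size $1$ removes that $1$ together with the $\deg(v)$ units subtracted from its neighbors, while deleting a vertex of list size $\deg(v)+1$ removes $\deg(v)+1$ directly. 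This common decrement is precisely what makes $\rho$ the correct quantity to pair with $\tau$, and is the only step where a miscount would break the argument.
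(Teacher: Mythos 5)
Your proof is correct. The paper itself states this lemma as a result cited to Heinold and includes no proof of it, so there is no in-paper argument to compare against; your argument---the upper bound via extending an optimal choice function on $G-v$ by $f(v)=\deg(v)+1$ and quoting part (2) of Lemma~\ref{minus}, and the lower bound via the dichotomy between non-simple choice functions (which give $\tau(G)$) and simple ones (which, through either branch of Lemma~\ref{minus}, yield a choice function on $G-v$ of size at most $\size(f)-\deg(v)-1$ and hence give $\rho(G)$)---is exactly the standard proof from Heinold's cited work, and the specialization $\rho(G)=GB(G)$ when every $G-v$ is sc-greedy is computed correctly. One cosmetic point: in the branch $f(v)>\deg(v)$ the decrement in $\size$ is $f(v)$, which is \emph{at least} $\deg(v)+1$ rather than exactly that; since the lower bound only needs $\size(f)\ge\chi_{SC}(G-v)+\deg(v)+1$, the inequality runs in the direction you need and nothing breaks, but your closing sentence should say ``at least'' rather than ``exactly.''
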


To show that a graph $G$ is sc-greedy, one can show that $G - v$ is sc-greedy for all $v\in V(G)$ and that there does not exist a non-simple choice function of size $GB-1$.  By recursively applying this idea, a known sc-greedy graph can eventually be obtained, and it suffices to show that at each step there does not exist a non-simple choice function of size one less than the greedy bound.

There are also results pertaining to how the sum choice number of a graph depends on the sum choice numbers of its blocks.
\begin{lemma}[Berliner et al. \cite{BBBD06}]\label{merge}
Let $G$ and $G'$ be such that $V(G)\cap V(G')=\{v_0\}$.  Then $$\chi_{SC}(G\cup G')=\chi_{SC}(G)+\chi_{SC}(G')-1.$$
\end{lemma}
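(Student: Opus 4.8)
The plan is to prove the two inequalities $\chi_{SC}(G\cup G')\le\chi_{SC}(G)+\chi_{SC}(G')-1$ and $\chi_{SC}(G\cup G')\ge\chi_{SC}(G)+\chi_{SC}(G')-1$ separately, writing $v_0$ for the shared (cut) vertex of $H:=G\cup G'$.

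For the upper bound I would take optimal choice functions $f_G$ and $f_{G'}$ for $G$ and $G'$ and combine them into a size function $f$ on $H$ that agrees with them off $v_0$ and sets $f(v_0)=f_G(v_0)+f_{G'}(v_0)-1$; this has $\size(f)=\chi_{SC}(G)+\chi_{SC}(G')-1$. To see $f$ is a choice function, fix an $f$-assignment $L$ and call a color $c\in L(v_0)$ \emph{$G$-good} if $G$ admits a proper coloring from $L|_G$ with $v_0\mapsto c$. The key counting step is that at most $f_G(v_0)-1$ colors of $L(v_0)$ fail to be $G$-good: otherwise $f_G(v_0)$ non-$G$-good colors would form a legal $v_0$-list realizing an $f_G$-assignment of $G$ that is nonetheless colorable (as $G$ is $f_G$-choosable), a contradiction. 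Symmetrically at most $f_{G'}(v_0)-1$ colors fail to be $G'$-good, and since $(f_G(v_0)-1)+(f_{G'}(v_0)-1)<|L(v_0)|$ there is a color that is simultaneously $G$-good and $G'$-good; coloring $v_0$ with it and extending on each side (the two extensions agree at $v_0$ and are otherwise on disjoint vertex sets) gives a proper coloring of $H$.

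For the lower bound I would show that any size function $f$ on $H$ with $\size(f)\le\chi_{SC}(G)+\chi_{SC}(G')-2$ admits a bad assignment. Let $a=f(v_0)$ and let $f_G,f_{G'}$ be the restrictions, so $\size(f_G)+\size(f_{G'})=\size(f)+a$. If either restriction is already non-choosable — say $\size(f_G)\le\chi_{SC}(G)-1$ — then a bad $f_G$-assignment, extended to $G'-v_0$ with fresh colors, is bad on $H$, since any $H$-coloring restricts to a proper coloring of $G$, which does not exist. So I may assume $\size(f_G)\ge\chi_{SC}(G)$ and $\size(f_{G'})\ge\chi_{SC}(G')$, and a short computation then forces $a\ge2$. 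The idea is to split the list of $v_0$ into two disjoint parts $A\sqcup B$ of sizes $\alpha$ and $\beta=a-\alpha$, color the rest of $G$ from a palette meeting $L(v_0)$ only in $A$ and the rest of $G'$ from a palette meeting $L(v_0)$ only in $B$, and make each side carry a bad assignment for the reduced size function ($g$ on $G$ with $g(v_0)=\alpha$, and $g'$ on $G'$ with $g'(v_0)=\beta$). Because the palettes are disjoint, every color of $A$ is bad for $G$ and every color of $B$ is bad for $G'$, so no color of $v_0$ simultaneously extends on both sides and $H$ is uncolorable.

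The crux — and the one genuine calculation — is choosing $\alpha$ so that both reduced functions fall strictly below their sum choice numbers at once, i.e. $\size(g)=\size(f_G)-a+\alpha\le\chi_{SC}(G)-1$ and $\size(g')=\size(f_{G'})-\alpha\le\chi_{SC}(G')-1$. These constraints confine $\alpha$ to an interval whose nonemptiness is exactly equivalent to the hypothesis $\size(f)\le\chi_{SC}(G)+\chi_{SC}(G')-2$, while the standing assumptions $\size(f_G)\ge\chi_{SC}(G)$ and $\size(f_{G'})\ge\chi_{SC}(G')$ guarantee the interval lies in $[1,a-1]$, so that both $\alpha,\beta\ge1$. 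I expect the main obstacle to be precisely this bookkeeping at the cut vertex: arranging the color sharing so that the construction saves exactly one unit (not two), and handling the boundary case $a=1$, which the reduction to a non-choosable side dispatches, since $a=1$ together with both restrictions choosable would already contradict the size hypothesis.
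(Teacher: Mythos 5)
The paper does not actually prove this lemma: it is quoted verbatim from Berliner, Bostelmann, Brualdi, and Deaett \cite{BBBD06}, so there is no in-paper argument to compare against. Your proof is correct and complete, and it follows the same strategy as the cited source: the upper bound by gluing optimal choice functions with $f(v_0)=f_G(v_0)+f_{G'}(v_0)-1$ and counting that at most $f_G(v_0)-1$ colors of $L(v_0)$ fail on the $G$ side and at most $f_{G'}(v_0)-1$ on the $G'$ side, and the lower bound by splitting $L(v_0)$ into two color-disjoint parts carrying bad assignments for the reduced size functions on each side. Your bookkeeping at the cut vertex is right: the interval for $\alpha$ is nonempty exactly when $\size(f)\le\chi_{SC}(G)+\chi_{SC}(G')-2$, the standing assumptions $\size(f_G)\ge\chi_{SC}(G)$ and $\size(f_{G'})\ge\chi_{SC}(G')$ pin it inside $[1,a-1]$, and the $a=1$ boundary case is correctly absorbed into the case where one restriction is already non-choosable.
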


This allows us to answer questions about graphs with cut-vertices.  Can we find an analogous result for graphs with cut-edges?  What if $G$ and $G'$ are both sc-greedy?  While these questions remain unanswered, Lemmas \ref{greedy} and \ref{merge} imply the following.

\begin{corollary}\label{main}
Let $G$ be a graph.
\begin{enumerate}
\item If $\tau(G)\ge GB(G)$ and $G-v$ is sc-greedy for all $v\in V$, then $G$ is sc-greedy.
\item If $G = G_1 \cup G_2$ with $V(G_1) \cap V(G_2) = \{v_0\}$ and $G_1$ and $G_2$ are both sc-greedy, then $G$ is sc-greedy.
\end{enumerate}
\end{corollary}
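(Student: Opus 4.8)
The plan is to derive both statements directly from Lemmas \ref{greedy} and \ref{merge}, treating the corollary as a bookkeeping exercise with the greedy bound rather than as a fresh coloring argument. In both cases Lemma \ref{GB} already supplies the upper bound $\chi_{SC}(G) \le GB(G)$, so everything reduces to pinning down $\chi_{SC}(G)$ exactly via the cited formulas.

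For the first part, I would invoke the ``in particular'' clause of Lemma \ref{greedy}: since $G-v$ is sc-greedy for every $v \in V(G)$, we have $\chi_{SC}(G) = \min\{GB(G), \tau(G)\}$. The hypothesis $\tau(G) \ge GB(G)$ then forces this minimum to equal $GB(G)$, so $\chi_{SC}(G) = GB(G)$ and $G$ is sc-greedy. It is worth recording why that clause holds, since it drives the argument: if $G-v$ is sc-greedy then $\chi_{SC}(G-v) = GB(G-v) = (|V(G)|-1) + (|E(G)| - \deg(v))$, whence $\chi_{SC}(G-v) + \deg(v) + 1 = |V(G)| + |E(G)| = GB(G)$ for each $v$. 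Thus $\rho(G) = GB(G)$, and the general identity $\chi_{SC}(G) = \min\{\rho(G), \tau(G)\}$ collapses to $\min\{GB(G), \tau(G)\}$.

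For the second part, I would apply Lemma \ref{merge} to obtain $\chi_{SC}(G) = \chi_{SC}(G_1) + \chi_{SC}(G_2) - 1$, and then substitute the sc-greedy values $\chi_{SC}(G_i) = GB(G_i) = |V(G_i)| + |E(G_i)|$. The remaining step is to check the vertex and edge counts of the union: because $V(G_1) \cap V(G_2) = \{v_0\}$ is a single vertex, the two graphs share no edge, so $|E(G)| = |E(G_1)| + |E(G_2)|$, while the one common vertex gives $|V(G)| = |V(G_1)| + |V(G_2)| - 1$. Combining these yields $\chi_{SC}(G) = (|V(G_1)| + |V(G_2)| - 1) + (|E(G_1)| + |E(G_2)|) = |V(G)| + |E(G)| = GB(G)$, so $G$ is sc-greedy.

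There is essentially no obstacle here beyond careful counting: both parts follow immediately from the cited lemmas. The only points requiring a moment's care are confirming in Part 2 that a single shared vertex contributes no shared edge (so the edge counts simply add), and noting in Part 1 that the hypothesis $\tau(G) \ge GB(G)$ is precisely what is needed to select $GB(G)$ from the minimum in Lemma \ref{greedy}.
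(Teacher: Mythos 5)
Your proof is correct and matches the paper's intended argument: the paper states this corollary without a written proof, remarking only that it is implied by Lemmas \ref{greedy} and \ref{merge}, and your derivation supplies exactly that bookkeeping. Both your use of the ``in particular'' clause of Lemma \ref{greedy} (with $\rho(G)=GB(G)$ when every $G-v$ is sc-greedy) for Part 1 and your vertex/edge count combined with Lemma \ref{merge} for Part 2 are the steps the authors leave to the reader.
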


In fact, even more can be said.
\begin{lemma}[Heinold \cite{Heinold07}]\label{lem:block}
Let $G$ be a graph with blocks $G_1,\ldots,G_k$.  Then $$\chi_{SC}(G)=\sum_{j=1}^k\chi_{SC}(G_j)-k+1.$$  In particular, a graph whose blocks are sc-greedy, is itself sc-greedy.
\end{lemma}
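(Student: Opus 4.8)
The plan is to prove the formula by induction on the number of blocks $k$, with Lemma~\ref{merge} as the engine. I would take $G$ to be connected, which is implicit in the statement: for a disconnected graph the sum choice number is additive over components, and the correction term would instead be $-k+p$ for $p$ components. For the base case $k=1$, the graph is a single block and the identity reads $\chi_{SC}(G)=\chi_{SC}(G_1)=\chi_{SC}(G_1)-1+1$, which holds trivially. The heart of the argument is to peel off one block at a time so that the hypotheses of Lemma~\ref{merge} are met.

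For the inductive step I would exploit the block--cut tree of $G$. Since $G$ is connected with $k\ge 2$ blocks, its block--cut tree has at least two leaves, so there is a \emph{leaf block}, say $G_k$, containing exactly one cut vertex $v_0$ of $G$. Let $G'=\bigcup_{j=1}^{k-1}G_j$ be the union of the remaining blocks. The key structural observation is that $V(G')\cap V(G_k)=\{v_0\}$: every vertex of $G_k$ other than $v_0$ lies in no other block, while $v_0$ is the unique vertex of $G_k$ shared with the rest of $G$. Moreover, the blocks of $G'$ are precisely $G_1,\ldots,G_{k-1}$, so the induction hypothesis applies to $G'$. Applying Lemma~\ref{merge} to $G=G'\cup G_k$ and then the induction hypothesis to $G'$ gives
$$\chi_{SC}(G)=\chi_{SC}(G')+\chi_{SC}(G_k)-1=\left(\sum_{j=1}^{k-1}\chi_{SC}(G_j)-(k-1)+1\right)+\chi_{SC}(G_k)-1=\sum_{j=1}^{k}\chi_{SC}(G_j)-k+1,$$
completing the induction.

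For the ``in particular'' statement there are two routes. The cleanest is a parallel induction using Corollary~\ref{main}(2): peel off the leaf block $G_k$, which is sc-greedy by hypothesis; then $G'$ is sc-greedy by the induction hypothesis, since all of its blocks are sc-greedy; and hence $G=G'\cup G_k$ is sc-greedy by Corollary~\ref{main}(2). Alternatively, one can substitute $\chi_{SC}(G_j)=|V(G_j)|+|E(G_j)|$ into the main formula and check it equals $GB(G)=|V(G)|+|E(G)|$. This reduces to two counting identities: the blocks partition the edge set, so $\sum_j|E(G_j)|=|E(G)|$; and a count over the block--cut tree gives $\sum_j|V(G_j)|=|V(G)|+k-1$, since each non-cut vertex is counted once, each cut vertex is counted once per incident block, and the total number of block--cutvertex incidences equals the number of edges of the block--cut tree. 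Substituting yields $\sum_j\chi_{SC}(G_j)-k+1=|V(G)|+|E(G)|=GB(G)$.

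The main obstacle is the structural step: verifying that a leaf block exists, that it meets the remainder of the graph in exactly the single cut vertex $v_0$, and that deleting its private vertices leaves a graph whose blocks are exactly $G_1,\ldots,G_{k-1}$. These are standard facts about the block--cut tree, but they must be invoked carefully so that the single-shared-vertex hypothesis of Lemma~\ref{merge} is genuinely satisfied at each stage of the recursion. Once this is secured, both the formula and the sc-greedy consequence follow from the induction outlined above.
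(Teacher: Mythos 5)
Your proof is correct. There is nothing in the paper to compare it against directly: the paper states this lemma without proof, citing Heinold's survey \cite{Heinold07}, and immediately after presenting Lemma \ref{merge} and Corollary \ref{main}. Your argument---induction on the number of blocks, peeling a leaf block of the block--cut tree off at its unique cut vertex and applying Lemma \ref{merge}, with either Corollary \ref{main}(2) or the counting identities $\sum_j|E(G_j)|=|E(G)|$ and $\sum_j|V(G_j)|=|V(G)|+k-1$ for the ``in particular'' clause---is exactly the derivation that the paper's own toolkit invites, and every step (existence of a leaf block, the single-shared-vertex hypothesis, connectedness of the remainder) is justified. Your observation that connectedness of $G$ is implicitly assumed, since otherwise the correction term must be $-k+p$ for $p$ components, is a genuine and worthwhile sharpening of the statement as printed.
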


We state the following special case explicitly, as it will be useful later on.
\begin{corollary}\label{cor:plus1}
If $G'$ is obtained by appending a vertex of degree one to a graph $G$, then $\chi_{SC}(G')=\chi_{SC}(G)+2$.  In particular, if $G$ is sc-greedy, then so is $G'$.
\end{corollary}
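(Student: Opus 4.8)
The plan is to realize $G'$ as an instance of the gluing operation in Lemma \ref{merge}. Write $G' = G \cup G_2$, where $G_2$ is the single edge joining the appended degree-one vertex $u$ to its unique neighbor $v_0 \in V(G)$; then $G_2 \cong K_2$ and $V(G) \cap V(G_2) = \{v_0\}$. Lemma \ref{merge} applies verbatim and gives
$$\chi_{SC}(G') = \chi_{SC}(G) + \chi_{SC}(K_2) - 1.$$

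The only quantity left to determine is $\chi_{SC}(K_2)$. Since $K_2 = P_2$ is a path, it is sc-greedy, so $\chi_{SC}(K_2) = GB(K_2) = |V(K_2)| + |E(K_2)| = 3$. (From scratch: Lemma \ref{GB} gives $\chi_{SC}(K_2) \le 3$; conversely, any choice function assigns list size at least $1$ to each vertex, so a size function of total size $2$ must be $(1,1)$, and assigning the single color $1$ to both endpoints of the edge leaves no proper coloring, so no size-$2$ function is a choice function and $\chi_{SC}(K_2) \ge 3$.) Substituting yields $\chi_{SC}(G') = \chi_{SC}(G) + 3 - 1 = \chi_{SC}(G) + 2$, which is the first claim.

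For the ``in particular'' statement I would suppose $G$ is sc-greedy, i.e. $\chi_{SC}(G) = GB(G) = |V(G)| + |E(G)|$. Appending a degree-one vertex adds exactly one vertex and one edge, so $GB(G') = |V(G')| + |E(G')| = (|V(G)|+1) + (|E(G)|+1) = GB(G) + 2$. Combining this with the first claim gives $\chi_{SC}(G') = \chi_{SC}(G) + 2 = GB(G) + 2 = GB(G')$, so $G'$ is sc-greedy.

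I do not anticipate a genuine obstacle here; the result is a direct corollary of Lemma \ref{merge} together with the base value $\chi_{SC}(K_2) = 3$. The only point requiring a moment's care is the greedy-bound bookkeeping, namely verifying that the single new vertex and single new edge raise $GB$ by exactly $2$, matching the additive constant produced by the gluing lemma; one could equivalently deduce the sc-greedy conclusion from Lemma \ref{lem:block}, since the blocks of $G'$ are those of $G$ together with the new sc-greedy block $K_2$.
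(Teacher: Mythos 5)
Your proposal is correct and follows essentially the same route as the paper: the paper states this corollary as an immediate special case of the block decomposition (Lemma \ref{lem:block}), which for a pendant edge is exactly the gluing of an sc-greedy $K_2$ at a single shared vertex via Lemma \ref{merge}, together with $\chi_{SC}(K_2)=3$ and the observation that $GB$ increases by $2$. You even note the equivalence of the two lemmas in this setting, so there is nothing to flag.
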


The following lemma is especially helpful in the case $r=0$ because it can be used to force a color on a specific vertex.
\begin{lemma}[Berliner et al. \cite{BBBD06}]\label{lem:vertforce}
Let $G$ be a graph and $f$ a choice function for $G$.  If $\size(f)=\chi_{SC}(G)+r$ for some $r\ge0$, then for any $v\in V(G)$ and any set $S$ of $r+1$ colors, there is an $f$-assignment $L$ for which every proper $L$-coloring of $G$ uses a color from $S$ on $v$.
\end{lemma}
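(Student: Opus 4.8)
The plan is to exploit the defining property of the sum choice number directly: any size function of size $\chi_{SC}(G)-1$ fails to be a choice function, and hence admits a \emph{non-colorable} assignment. I would engineer such a deficient size function by removing exactly $r+1$ units of list size from the target vertex $v$, and then returning those units in the form of the colors of $S$. The forcing then comes for free from the fact that $f$ itself is a choice function, so the assignment I build is guaranteed to be colorable.

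Before the main construction I would dispose of the degenerate case $f(v)\le r+1$. Here one simply takes any $f$-assignment $L$ with $L(v)\subseteq S$, which is possible because $|S|=r+1\ge f(v)$; since $f$ is a choice function, $G$ has a proper $L$-coloring, and in every such coloring the color of $v$ lies in $L(v)\subseteq S$.

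For the main case $f(v)\ge r+2$, I would define $g$ by $g(v)=f(v)-(r+1)$ and $g(w)=f(w)$ for $w\ne v$; then $\size(g)=\chi_{SC}(G)-1$, so $g$ is not a choice function and there is a $g$-assignment $L_0$ from which $G$ cannot be properly colored. I then set $L(v)=L_0(v)\cup S$ and $L(w)=L_0(w)$ elsewhere, which is an $f$-assignment as long as $L_0(v)$ is disjoint from $S$. Because $f$ is a choice function, $L$ admits a proper coloring $c$; and if $c(v)\notin S$, then $c(v)\in L_0(v)$, so $c$ would be a proper coloring of $G$ from $L_0$, contradicting the choice of $L_0$. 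Hence $c(v)\in S$, and as $c$ was an arbitrary proper $L$-coloring, every proper $L$-coloring uses a color of $S$ on $v$.

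The two places that require care — and which I regard as the crux — are the bookkeeping that makes this well-defined rather than any deep argument. Reducing $f(v)$ by $r+1$ only yields a legitimate size function when $f(v)\ge r+1$, which is precisely why the case split above is needed. Second, upgrading ``$g$ is not a choice function'' to a \emph{convenient} non-colorable assignment requires $L_0(v)\cap S=\emptyset$, so that $|L(v)|=g(v)+(r+1)=f(v)$; I would secure this by noting that colorability is invariant under any bijective relabeling of the color set $\mathbb{N}$, and since $\mathbb{N}$ is infinite while $S$ and $L_0(v)$ are finite, one can relabel so that $v$'s list avoids $S$ without disturbing non-colorability.
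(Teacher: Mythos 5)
Your proof is correct. Note that the paper itself does not prove Lemma \ref{lem:vertforce} --- it is quoted from Berliner et al.\ \cite{BBBD06} --- and your argument is essentially the standard one from that source: shrink $f(v)$ by $r+1$ to drop below $\chi_{SC}(G)$, take a non-colorable assignment for the deficient function, and restore the deficit by adjoining $S$ to the list at $v$, so that any proper coloring avoiding $S$ on $v$ would color the bad assignment. Your two points of care (the degenerate case $f(v)\le r+1$, where the conclusion is immediate by taking $L(v)\subseteq S$, and relabeling colors so that $L_0(v)\cap S=\emptyset$, which is harmless since non-colorability is preserved under a bijective renaming of colors) are exactly the right bookkeeping, and the proof is complete as written.
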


\section{Graphs on five vertices}\label{sec:five}
In this section we explore graphs on at most five vertices 
and determine their sum choice number.  As noted earlier, all graphs on at most four vertices are sc-greedy.  This is not hard to check.
Since $K_{2,3}$ is a graph on five vertices that is not sc-greedy, we look at all other graphs on five vertices.  We note here that techniques similar to those used below are currently being used to determine the sum choice number of all graphs on six vertices.

Let $G$ be a connected graph on five vertices.  If $G$ has a cut-vertex, then $G$ is sc-greedy by Lemma \ref{lem:block}.  Thus, it remains to consider all graphs on five vertices that do not have a cut-vertex, of which there are nine nonisomorphic such graphs. These graphs are depicted, and named for convenience, in Figure \ref{fig:G5}.

\begin{figure}[h]
\begin{center}
\subfloat[][$G5.1$]{\label{fig:G5_1}\includegraphics{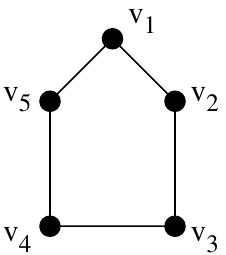}}
\quad
\subfloat[][$G5.2$]{\label{fig:G5_2}\includegraphics{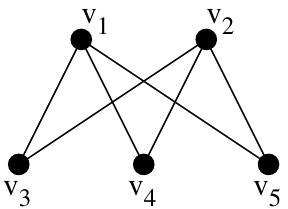}}
\quad
\subfloat[][$G5.3$]{\label{fig:G5_3}\includegraphics{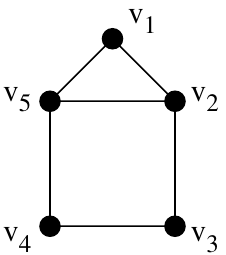}}
\quad
\subfloat[][$G5.4$]{\label{fig:G5_4}\includegraphics{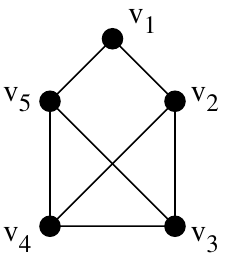}}
\quad
\subfloat[][$G5.5$]{\label{fig:G5_5}\includegraphics{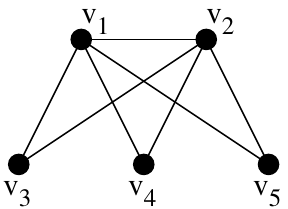}}
\qquad
\subfloat[][$G5.6$]{\label{fig:G5_6}\includegraphics{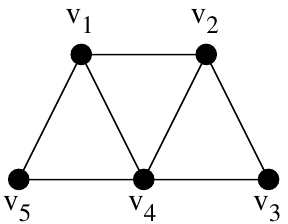}}
\qquad
\subfloat[][$G5.7$]{\label{fig:G5_7}\includegraphics{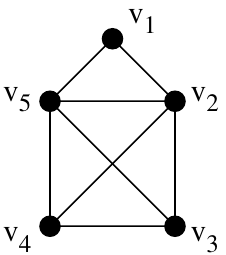}}
\qquad
\subfloat[][$G5.8$]{\label{fig:G5_8}\includegraphics{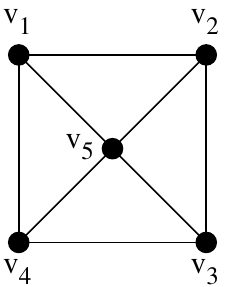}}
\qquad
\subfloat[][$G5.9$]{\label{fig:G5_9}\includegraphics{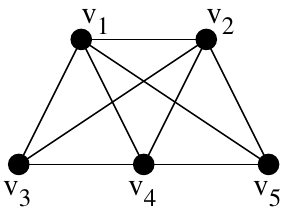}}
\end{center}
\caption{All graphs on five vertices without a cut-vertex.}
\label{fig:G5}
\end{figure}

Table \ref{tab:5vertices} summarizes the greedy bound, sum choice number, and whether or not the result was known for each of the graphs on five vertices in consideration, including if the result was determined using Sage.  After considering previous results, and ruling out additional graphs using Sage, we are left with two distinct graphs, $G5.4$ and $G5.8$, for which we must determine the sum choice number.\\

\begin{table}[h]
\begin{center}
\caption{Sum choice number of graphs on five vertices \cite{Heinold06}.}
\label{tab:5vertices}
\begin{tabular}{|c|c|c|c|c|}
  \hline
  $G$ & $\chi_{SC}(G)$ & $GB(G)$ & sc-greedy? & result \\ \hline
  $G5.1$ & 10 & 10 & yes & $C_5$ \\ \hline
  $G5.2$ & 10 & 11 & no & $K_{2,3}$ \\ \hline
  $G5.3$ & 11 & 11 & yes & $\Theta_{0,1,2}$ \\ \hline
  $G5.4$ & 11 & 12 & no &  \\ \hline
  $G5.5$ & 12 & 12 & yes & Sage \\ \hline  
  $G5.6$ & 12 & 12 & yes & $BW_4$,$P_5^2$ \\ \hline
  $G5.7$ & 13 & 13 & yes & Sage \\ \hline
  $G5.8$ & 12 & 13 & no & $W_4$ \\ \hline
  $G5.9$ & 14 & 14 & yes & Sage \\
  \hline
\end{tabular}
\end{center}
\end{table}

We note here that in the case analyses that follow, specific size functions of size $GB-1$ will be considered.  These size functions were determined using Sage and are the only such such functions that need to be considered.  This is because for all other size functions $g$ of size $GB-1$, $g$-assignments that are not list-colorable were found using Sage.  The number of distinct colors used in the list assignments found using Sage is at most two more than the maximum list size.  Also, any vertex labeling referred to in this discussion is as in Figure \ref{fig:G5}.\\ 

\noindent\textbf{The graph $G5.4$ is not sc-greedy.}  This graph has greedy bound $12$, we show that $\chi_{SC}(G5.4)=11$. It will be shown that the size function $f$ as shown in Figure \ref{fig:G5_4f} is a choice function of size $11$ for $G5.4$. Fix an arbitrary $f$-assignment $L$.  We determine information about $L$.  If $L(v_1)\cap L(v_2)=\emptyset$ or $L(v_1)\cap L(v_5)=\emptyset$, then $G5.4$ will be $L$-colorable.  This is because, without loss of generality, $G5.4-v_1v_2$ has a cut-vertex so it is sc-greedy.  So assume $L(v_1)\cap L(v_2)\neq\emptyset$ and $L(v_1)\cap L(v_5)\neq\emptyset$.\\

Case 1: There is an element $a\in L(v_1)\cap L(v_2)$ such that $a\not\in L(v_5)$.  Assign color $a$ to $v_1$, remove it from $L(v_2)$, then delete $v_1$.  Figure \ref{fig:G5_4f1} illustrates the list sizes on the remaining vertices.  This graph can be colored from lists of these sizes unless the lists are as illustrated in Figure \ref{fig:G5_4L1_}.  This implies $v_1$ should not be assigned $a$.  It also provides information about all of the lists.  There are two possible distinct list assignments obtained from this situation, see Figures \ref{fig:G5_4L1} and \ref{fig:G5_4L2}.  The graph $G5.4$ can be colored from both of these list assignments.  \\

Case 2: There is an element $a\in L(v_1)\cap L(v_2)\cap L(v_5)$.  Assign color $a$ to $v_2$ and $v_5$, remove it from $L(v_1)$, $L(v_3)$ and $L(v_4)$, then delete $v_2$ and $v_5$.  The remaining vertices can be colored in the following order: $v_1,\,v_4,\,v_3$.  This shows that $f$ is a choice function of size $11$ for $G5.4$.   \\
It is not hard to verify that a size function of size $10$ will never be a choice function for $G5.4$.  Thus, $\chi_{SC}(G5.4)=11$.\\

\begin{figure}[h]
\begin{center}
\subfloat[][$G5.4$]{\label{fig:G5_4f}\includegraphics{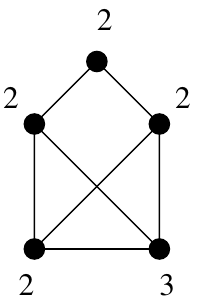}}
\quad
\subfloat[][$G5.4-v_1$]{\label{fig:G5_4f1}\includegraphics{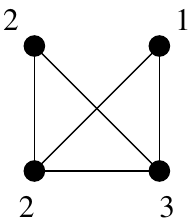}}
\quad
\subfloat[][$G5.4-v_1$]{\label{fig:G5_4L1_}\includegraphics{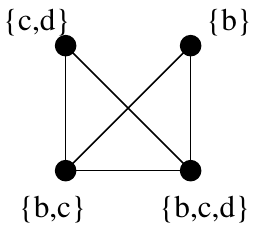}}
\quad
\subfloat[][$G5.4$]{\label{fig:G5_4L1}\includegraphics{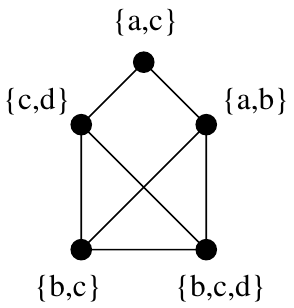}}
\quad
\subfloat[][$G5.4$]{\label{fig:G5_4L2}\includegraphics{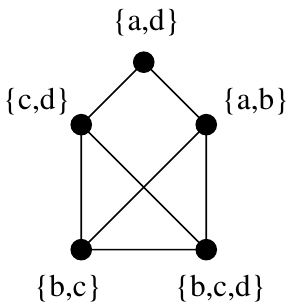}}
\end{center}
\caption{$G5.4$ is not sc-greedy.}
\label{fig:G5.4sc}
\end{figure}

\noindent\textbf{The graph $G5.8$ is not sc-greedy.}  This graph is $W_4$ and has greedy bound $13$, but we show that $\chi_{SC}(G5.8)=\chi_{SC}(W_4)=12$.  It will be shown that the size function $f$, as illustrated in Figure \ref{fig:G5_8f}, is a choice function for $G5.8$.   If there is an element $a\in L(v_2)\cap L(v_4)$, then assign $a$ to vertices $v_2$ and $v_4$.  The remaining vertices can then be colored.  Thus, assume $L(v_2)\cap L(v_4)=\emptyset$.  Without loss of generality, let $L(v_2)=\{a,b\}$, $L(v_4)=\{c,d\}$, and assume $d\in L(v_4)-L(v_5)$.  
Assign $d$ to $v_4$, then remove $d$ from the lists of adjacent vertices, and delete $v_4$ from the graph.  Figure \ref{fig:G5_8f_} illustrates the list sizes on the remaining vertices.  This graph can be colored from lists of the indicated sizes, unless the lists are as illustrated in Figure \ref{fig:G5_8L_}.  This implies $v_4$ should not be assigned $d$.  However, this indicates what all of the lists will be, and Figure \ref{fig:G5_8L} shows these lists. This graph can be colored from the provided $L$-assignment.
It is not hard to verify that a size function of size $11$ will never be a choice function for $G5.8$.  Thus, $\chi_{SC}(G5.8)=12$. \\

\begin{figure}[h]
\begin{center}
\subfloat[][$G5.8$]{\label{fig:G5_8f}\includegraphics{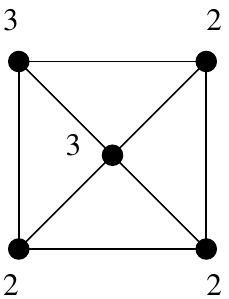}}
\quad
\subfloat[][$G5.8-v_4$]{\label{fig:G5_8f_}\includegraphics{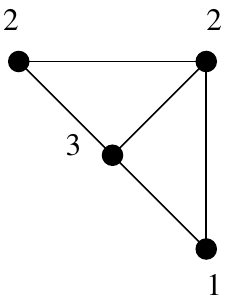}}
\quad
\subfloat[][$G5.8-v_4$]{\label{fig:G5_8L_}\includegraphics{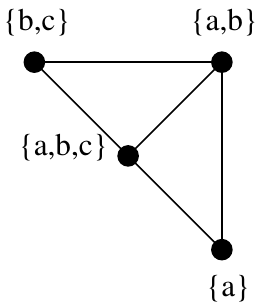}}
\quad
\subfloat[][$G5.8$]{\label{fig:G5_8L}\includegraphics{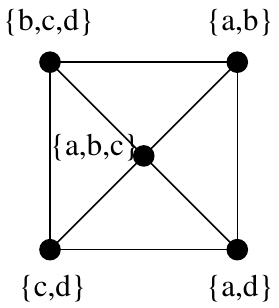}}
\end{center}
\caption{$G5.8$ is not sc-greedy.}
\label{fig:G5.8sc}
\end{figure}

\section{General results and examples}
In this section, we present new general results about sum list coloring and provide corresponding examples for illustration.
\begin{claim}\label{sub}
Let $G=(V,E)$ be a graph.  If there is a $v\in V(G)$ for which $G-v$ is not sc-greedy, then $G$ is not sc-greedy.
\end{claim}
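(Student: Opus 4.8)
The plan is to exploit the recursive upper bound for $\chi_{SC}$ supplied by Lemma~\ref{greedy} together with a bookkeeping identity relating the greedy bounds of $G$ and $G-v$. Since $\chi_{SC}(H) \le GB(H)$ for every graph $H$, the hypothesis that $G-v$ is not sc-greedy is equivalent to the strict inequality $\chi_{SC}(G-v) \le GB(G-v) - 1$. The goal is then to convert this into $\chi_{SC}(G) \le GB(G) - 1$, which says precisely that $G$ is not sc-greedy.

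First I would record how the greedy bound changes under deletion of $v$. Removing $v$ deletes one vertex and exactly $\deg(v)$ edges, so $|V(G-v)| = |V(G)| - 1$ and $|E(G-v)| = |E(G)| - \deg(v)$; hence
$$GB(G) = GB(G-v) + \deg(v) + 1.$$
Next I would invoke Lemma~\ref{greedy}, which gives $\chi_{SC}(G) = \min\{\rho(G), \tau(G)\} \le \rho(G)$. By the definition of $\rho$, taking $w = v$ in the defining minimum yields $\rho(G) \le \chi_{SC}(G-v) + \deg(v) + 1$, so that
$$\chi_{SC}(G) \le \chi_{SC}(G-v) + \deg(v) + 1.$$

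Combining this with the hypothesis $\chi_{SC}(G-v) \le GB(G-v) - 1$ and the identity from the previous step gives
$$\chi_{SC}(G) \le \bigl(GB(G-v) - 1\bigr) + \deg(v) + 1 = GB(G-v) + \deg(v) = GB(G) - 1,$$
and therefore $G$ is not sc-greedy. The argument is short, and there is no serious obstacle once the right tool is identified: the only point requiring care is recognizing that $\rho(G)$ already encodes exactly the slack produced by deleting a single vertex, so that the $+\deg(v)+1$ term appearing in $\rho$ cancels against the $+\deg(v)+1$ change in the greedy bound and a one-unit deficiency in $G-v$ propagates to a one-unit deficiency in $G$.
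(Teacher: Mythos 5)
Your proof is correct, but it takes a different (more modular) route than the paper's. The paper argues constructively: it takes a minimum choice function $f'$ for $G-v$, extends it to a size function $f$ on $G$ by setting $f(v)=\deg(v)+1$ and $f(u)=f'(u)$ otherwise, observes that $f$ is then a choice function for $G$ (implicitly via Lemma \ref{minus}, part 2, or a greedy-coloring of $v$ last), and checks that $\size(f)=\size(f')+\deg(v)+1<|V(G)|+|E(G)|$. You instead invoke Lemma \ref{greedy} as a black box: $\chi_{SC}(G)\le\rho(G)\le\chi_{SC}(G-v)+\deg(v)+1$, and then combine this with the identity $GB(G)=GB(G-v)+\deg(v)+1$ and the hypothesis $\chi_{SC}(G-v)\le GB(G-v)-1$. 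The two arguments are the same at their core --- the $\rho$-bound in Lemma \ref{greedy} is proved by exactly the construction the paper carries out inline --- but your version buys brevity and sidesteps the one point the paper glosses over, namely the verification that the extended $f$ is genuinely a choice function; the paper's version buys self-containedness and exhibits an explicit witness function of deficient size, which makes the mechanism of the claim visible rather than hidden inside a cited lemma.
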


\begin{proof}
Assume $|V|=n$ and $|E|=e$.  Let $f'$ be a choice function of minimum size for the graph $G-v$ which is not sc-greedy.  Then $\size(f')<(n-1)+(e-\deg(v))$.  Now let $f$ be a size function for $G$ such that $f(u)=f'(u)$ for all $u\neq v$ and $f(v)=\deg(v)+1$.  Then $f$ is a choice function for $G$ such that $\size(f)=\size(f')+1+\deg(v)< n+e$ and it follows that $G$ is not sc-greedy. 
\end{proof}

This claim generalizes in the form of the following corollary.

\begin{corollary}\label{cor:sub}
If a graph $G$ contains an induced subgraph that is not sc-greedy, then $G$ is not sc-greedy.
\end{corollary}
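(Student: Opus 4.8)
The plan is to reduce to Claim \ref{sub} by iterating single-vertex deletions. The key structural observation is that any induced subgraph $H$ of $G$ can be reached from $G$ by deleting the vertices of $S = V(G) \setminus V(H)$ one at a time, and at every stage the intermediate graph is still an induced subgraph of the previous one (deleting a single vertex always yields an induced subgraph). This lets me convert a statement about a subgraph at arbitrary "distance" into a chain of one-step deletions, each of which is exactly the situation handled by Claim \ref{sub}.

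Concretely, I would argue by induction on $m = |V(G)| - |V(H)|$, the number of vertices that must be removed to obtain $H$. The base case $m = 0$ is immediate, since then $H = G$ and the hypothesis that $H$ is not sc-greedy is the conclusion. For the inductive step, I would fix some $v \in S$ and set $G' = G - v$. Since $H$ is an induced subgraph of $G$ avoiding $v$, it is also an induced subgraph of $G'$, now missing only $m-1$ vertices of $G'$. By the inductive hypothesis applied to $G'$ and its induced subgraph $H$, the graph $G'$ is not sc-greedy. Then Claim \ref{sub}, applied with this choice of $v$ (so that $G - v = G'$ is the non-sc-greedy vertex-deleted subgraph), immediately yields that $G$ is not sc-greedy.

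I expect no substantive obstacle here: the corollary is essentially a transitivity/telescoping consequence of Claim \ref{sub}, and the only point requiring a moment of care is verifying that $H$ genuinely persists as an induced subgraph throughout the deletion process. This is where I would be slightly explicit, noting that induced subgraph containment is transitive and that each single-vertex deletion $G_i = G_{i-1} - v_i$ is an induced subgraph of $G_{i-1}$, so that after removing all of $S$ the resulting graph is precisely the subgraph induced on $V(H)$, namely $H$ itself. With that bookkeeping in place, the repeated application of Claim \ref{sub} along the chain $G = G_0 \supset G_1 \supset \cdots \supset G_m = H$ completes the argument.
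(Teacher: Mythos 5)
Your proof is correct and takes essentially the same approach as the paper: induction on $|V(G)|-|V(H)|$ with Claim \ref{sub} supplying the single-vertex step. The only difference is cosmetic—you delete a vertex from $G$ and apply the Claim at the top of the chain, while the paper adds a vertex to $H$ and applies the Claim at the bottom—but the telescoping argument is identical.
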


\begin{proof}
Let $H$ be an induced subgraph of $G$ that is not sc-greedy.  The proof is by induction on $|V(G)|-|V(H)|$.  If $|V(G)|-|V(H)|=1$, then the result follows from Claim \ref{sub}.  Now assume the result holds for all induced subgraphs $H'$ for which $|V(G)|-|V(H')|<|V(G)|-|V(H)|$.  Specifically, it holds for $H\cup v$ for any $v\in N(H)$.  By Claim \ref{sub}, $H\cup v$ is not sc-greedy.  Since $|V(G)|-|V(H\cup v)|<|V(G)|-|V(H)|$, it follows by induction that $G$ is not sc-greedy.
\end{proof}

One might ask whether all triangulations are sc-greedy.
Corollary \ref{cor:sub} implies that not all triangulations are sc-greedy because it is possible to add three vertices of degree $4$ to $K_{2,3}$ to obtain a triangulation, see Figure \ref{fig:indsub}.  This graph is a triangulation with induced subgraph $K_{2,3}$, which is not sc-greedy.

\begin{figure}[h]
\begin{center}
\subfloat[][$K_{2,3}$]{\label{fig:K23}\includegraphics{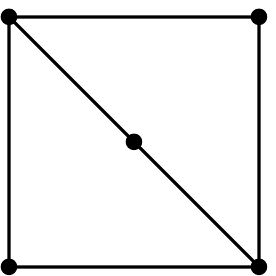}}
\qquad
\subfloat[][$K_{2,3}$ an induced subgraph of a triangulation]{\label{fig:K23plus}\includegraphics{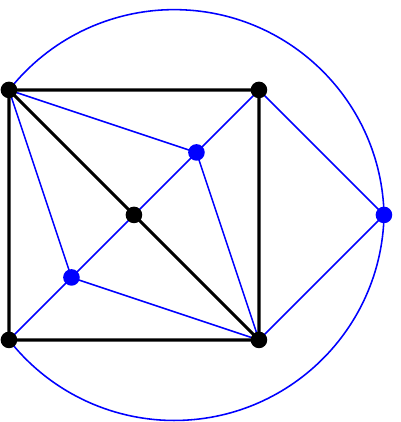}}
\end{center}
\caption{An example that illustrates not all triangulations are sc-greedy.}
\label{fig:indsub}
\end{figure}

We may also look at what can happen to the sum choice number of a graph upon addition of an edge.  One might predict that adding an edge to a graph would increase the sum choice number by at most one. However, this is not the case.  The following two facts can be observed:
\begin{enumerate}
\item There exist graphs that differ by an edge that have the same sum choice number.
\item There exist graphs that differ by an edge, but whose sum choice numbers differ by two.
\end{enumerate}
The first fact can be observed in Figure \ref{fig:edgesame} which displays two graphs that differ by an edge.  The graph in Figure \ref{fig:G1} is a $4$-cycle with a pendant path, hence sc-greedy, and the graph in Figure \ref{fig:G2} is $K_{2,3}$.  Both of these graphs have sum choice number $10$.
\begin{figure}[h]
\begin{center}
\subfloat[][$4$-cycle with a pendant path]{\label{fig:G1}\includegraphics{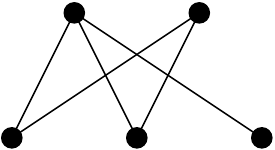}}
\qquad
\subfloat[][$K_{2,3}$]{\label{fig:G2}\includegraphics{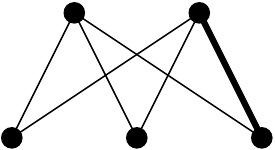}}
\end{center}
\caption{Two graphs that differ by an edge with the same sum choice number.}
\label{fig:edgesame}
\end{figure}
The second fact can be observed in Figure \ref{fig:edgeplus2} which displays two graphs that differ by an edge.  The graph in Figure \ref{fig:G3} is $K_{2,3}$, whose sum choice number is $10$, while the graph in Figure \ref{fig:G4} is a graph we call $G5.5$ whose sum choice number is $12$, see Section \ref{sec:five}.
\begin{figure}[h]
\begin{center}
\subfloat[][$K_{2,3}$]{\label{fig:G3}\includegraphics{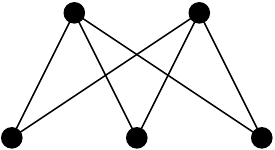}}
\qquad
\subfloat[][$G5.5$]{\label{fig:G4}\includegraphics{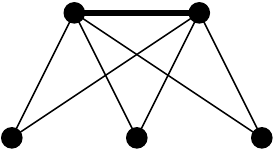}}
\end{center}
\caption{Two graphs that differ by an edge with sum choice numbers that differ by $2$.}
\label{fig:edgeplus2}
\end{figure}

\subsection{Edge subdivision and minors}
Some things can be said about edge subdivision and sc-greedy graphs.

\begin{claim}
There exist graphs that are sc-greedy for which it is possible to subdivide an edge and obtain a graph that is not sc-greedy.
\end{claim}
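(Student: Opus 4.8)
The statement is existential, so the plan is simply to exhibit a single sc-greedy graph together with an edge whose subdivision destroys sc-greediness. Rather than guess an sc-greedy graph and subdivide an edge at random, I would work backwards from a graph already known \emph{not} to be sc-greedy and try to realize it as a subdivision. The smallest such graph is $K_{2,3}$, which is not sc-greedy (indeed $\chi_{SC}(K_{2,3})=10<11=GB(K_{2,3})$), so it is the natural target.

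Next I would recognize $K_{2,3}$ as a one-edge subdivision of a four-vertex graph. Write the parts of $K_{2,3}$ as $\{a,b\}$ and $\{x,y,z\}$, so every vertex of $\{x,y,z\}$ has degree two with the two non-adjacent neighbours $a$ and $b$. Smoothing the degree-two vertex $x$ (deleting $x$ and joining its two neighbours) adds the edge $ab$ and leaves a simple graph on $\{a,b,y,z\}$ containing every edge except $yz$; that is, the diamond $K_4-e$, two triangles sharing the edge $ab$. Conversely, subdividing the edge $ab$ of $K_4-e$ reinserts a degree-two vertex adjacent to $a$ and $b$ and returns exactly $K_{2,3}$. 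So $G=K_4-e$, together with the shared edge $ab$ (the edge joining the two degree-three vertices), is the candidate.

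It then remains only to certify that $G=K_4-e$ is itself sc-greedy. This is the one verification step, but it is essentially free: $K_4-e$ has four vertices, and every graph on at most four vertices is sc-greedy, as noted at the start of Section~\ref{sec:five}. Thus $K_4-e$ is sc-greedy while its one-edge subdivision $K_{2,3}$ is not, which proves the claim. I expect no real obstacle: the only content is the structural observation that $K_{2,3}$ is a subdivision of a four-vertex graph, and choosing a four-vertex base graph makes the sc-greediness check automatic. The one place where work \emph{could} appear is if one insisted on a base graph whose sc-greediness is not immediate, in which case the effort would shift entirely to verifying that base graph; keeping the base graph on four vertices is therefore the key simplification.
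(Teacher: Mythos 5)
Your proposal is correct and coincides with the paper's own example: the graph $BW_3$ used in the paper is precisely $K_4-e$ (the diamond), and subdividing the edge between its two degree-three vertices yields $K_{2,3}$, exactly as you describe. The justification is also the same, since the sc-greediness of the base graph follows from it having only four vertices.
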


See Figure \ref{fig:subnosc} for an example.  An edge of $BW_3$, which is sc-greedy, can be subdivided to obtain $K_{2,3}$, which is not sc-greedy.

\begin{figure}[h]
\begin{center}
\subfloat[][$BW_3$]{\label{fig:BW3sub}\includegraphics{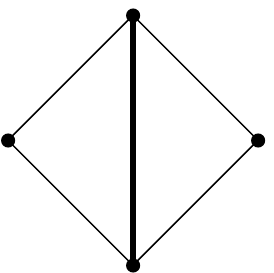}}
\qquad
\subfloat[][$K_{2,3}$]{\label{fig:K23suba}\includegraphics{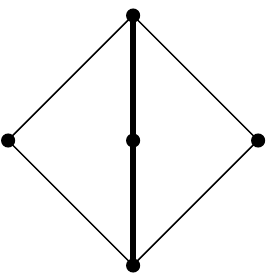}}
\end{center}
\caption{$K_{2,3}$ can be obtained by subdividing an edge of $BW_3$.}
\label{fig:subnosc}
\end{figure}

\begin{claim}
There exists graphs that are not sc-greedy for which it is possible to subdivide an edge and obtain a graph that is sc-greedy.
\end{claim}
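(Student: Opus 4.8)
The plan is to reverse the construction used in the previous claim: rather than starting from an sc-greedy graph, I would begin with $K_{2,3}$, which was identified earlier as the smallest graph that is not sc-greedy, and subdivide one of its edges. The key observation is that $K_{2,3}$ is precisely the theta graph $\Theta_{1,1,1}$, since its two degree-three vertices are joined by three internally disjoint paths, each having a single internal vertex.

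Next I would note that subdividing any edge of $\Theta_{1,1,1}$ inserts a new vertex into one of these three paths, lengthening it from one internal vertex to two; by the symmetry of $K_{2,3}$ the choice of edge is irrelevant, and the resulting graph is $\Theta_{1,1,2}$ in every case. I would then invoke the classification of sc-greedy theta graphs recorded in Section 2, namely that $\Theta_{k_1,k_2,k_3}$ is sc-greedy unless $k_1=k_2=1$ and $k_3$ is odd. Since $\Theta_{1,1,2}$ has its third path length equal to the even number $2$, it falls outside the exceptional case and is therefore sc-greedy.

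Because $\Theta_{1,1,1}=K_{2,3}$ is not sc-greedy while the subdivided graph $\Theta_{1,1,2}$ is, this pair establishes the claim. There is no genuine obstacle here: the entire argument rests on correctly identifying $K_{2,3}$ with a theta graph and tracking the parity of the subdivided path, after which the cited theta-graph classification does all the work. The only care needed is to confirm that subdivision raises the relevant parameter from the odd value $1$ to the even value $2$, which is exactly what pushes the graph out of the non-sc-greedy family $\Theta_{1,1,\mathrm{odd}}$ and into the sc-greedy regime. One could equally exhibit the entire family $\Theta_{1,1,2k+1}\rightarrow\Theta_{1,1,2k+2}$ to strengthen the illustration, using the values from Table \ref{tab:sc}.
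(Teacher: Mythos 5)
Your proposal is correct and uses exactly the example in the paper: subdividing an edge of $K_{2,3}$ to obtain $\Theta_{1,1,2}$, which is sc-greedy by the classification of theta graphs cited in Section 2. Your identification of $K_{2,3}$ with $\Theta_{1,1,1}$ and the explicit parity check just make explicit what the paper leaves implicit, so the two arguments are essentially the same.
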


See Figure \ref{fig:subsc} for an example.  An edge of $K_{2,3}$, which is not sc-greedy, can be subdivided to obtain $\Theta_{1,1,2}$, which is sc-greedy.

\begin{figure}[h]
\begin{center}
\subfloat[][$K_{2,3}$]{\label{fig:K23subb}\includegraphics{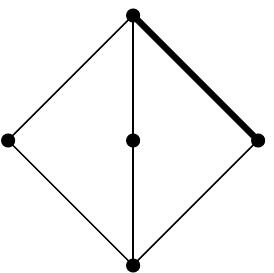}}
\qquad
\subfloat[][$\Theta_{1,1,2}$]{\label{fig:Theta112sub}\includegraphics{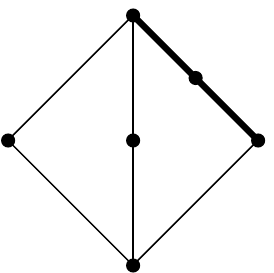}}
\end{center}
\caption{$\Theta_{1,1,2}$ can be obtained by subdividing an edge of $K_{2,3}$.}
\label{fig:subsc}
\end{figure}

The above two claims imply the following:
\begin{enumerate}
\item There exist graphs that are not sc-greedy with minors that are sc-greedy.
\item There exist graphs that are sc-greedy with minors that are not sc-greedy.
\end{enumerate}

\subsection{Minimally not sc-greedy graphs}\label{subsec:minnscg}
\begin{definition}
A graph $G$ is \textbf{minimally not sc-greedy} if for all $S\subset V(G)$, $G[S]$ is sc-greedy, but $G$ is not sc-greedy.
\end{definition}

Two examples of minimally not sc-greedy graphs are $K_{2,3}$ and $P_3\times P_3$.  Two more examples are the graphs  $G5.4$ (see Figure \ref{fig:G5_4}) and $G5.8$ (see Figure \ref{fig:G5_8}), which will be presented later.

\begin{question}\label{quest:minnscg}
If $G$ is minimally not sc-greedy, does it follow that $\chi_{SC}(G)$ is equal to $GB(G)-1$?
\end{question}

Note that for the examples of minimally not sc-greedy graphs mentioned above, the answer to this question is yes.\\

Assume $G$ is a minimally not sc-greedy graph for which $\chi_{SC}(G)=GB(G)-2$.  Then, there is a non-simple choice function $g$ for $G$ of size $GB(G)-2$ by Lemma \ref{greedy}.  Additionally, $\chi_{SC}(G-v)=GB(G-v)=GB(G)-\deg(v)-1$ for all $v\in V(G)$.  This implies the following:
\begin{eqnarray*}
  \size(g_{G-v}) &=& GB(G)-2-g(v) \\
   &\ge& \chi_{SC}(G-v) \\
   &=& GB(G)-\deg(v)-1 \\
  \Rightarrow \qquad GB(G)-2-g(v) &\ge& GB(G)-\deg(v)-1 \\
  \Rightarrow \qquad g(v) &\le& \deg(v)-1
\end{eqnarray*}
This must be true for all $v\in V(G)$, which implies that $\delta(G)\ge 3$.  However, the graphs $K_{2,3}$, $P_3\times P_3$, and $G5.4$ are all minimally not sc-greedy graphs of minimum degree $2$.  Thus, if $G$ is a minimally not sc-greedy graph for which $\delta(G)=2$, then $\chi_{SC}(G)=GB(G)-1$.
Note that $G5.8$ is a minimally not sc-greedy graph of minimum degree $3$.

Lemma \ref{greedy} implies that if a graph $G$ is minimally not sc-greedy, then (1) $G-v$ is sc-greedy for all $v\in V(G)$ and (2) there is a non-simple choice function $f$ for $G$ such that $\size(f)=GB(G)-1$.  Hence, the question remains if there is a non-simple choice function $f$ for $G$ such that $\size(f)<GB(G)-1$.  Does such a graph $G$ exist?  This would provide an answer to Question \ref{quest:minnscg}.

\subsection{Wheels and broken wheels}\label{sec:wbw}
Heinold \cite{Heinold06} explored the broken wheel $BW_k$ with respect to sum choosability.  In particular, he showed that $BW_{10}$ is not sc-greedy.  This provides some very useful information.  First, it implies that not all outerplanar graphs are sc-greedy.  It also implies that $BW_k$ will not be sc-greedy for all $k\ge10$ since such graphs will have $BW_{10}$ as an induced subgraph.  It was also shown that $BW_k$ is sc-greedy for all $k\le9$.  It may be observed here that $BW_{10}$ is minimally not sc-greedy.  Heinold also showed that there exist $k$ for which the gap between $GB(BW_k)$ and $\chi_{SC}(BW_k)$ is arbitrarily large.  While it is known which broken wheels are sc-greedy, it is not known what $\chi_{SC}(BW_k)$ is in general.  Heinold guessed that $\chi_{SC}(BW_k)=GB(BW_k)-\left\lfloor \frac{k+1}{11} \right\rfloor$.  See \cite{Heinold06} for more details.  His dissertation also establishes many techniques that could be used to determine $\chi_{SC}(BW_k)$.

As far as we know, the determination of $\chi_{SC}(BW_k)$ remains an open problem.  However, the results mentioned above can be used to obtain information about the sum choosability of wheels $W_k$.  Since $BW_{10}$ is not sc-greedy, it follows that $W_k$ is not sc-greedy for all $k\ge11$ since all of these graphs will have $BW_{10}$ as an induced subgraph.  Thus, in determining which wheels are sc-greedy, it remains to examine $W_k$ for $k\le10$.  For small values of $k$, the result follows quickly.  The graph $W_3$ is isomorphic to $K_4$, and thus is sc-greedy.  It was shown earlier that $W_4$ is not sc-greedy.  The classification would be completed by looking at $W_k$ for $k=5,\ldots,10$.

\begin{problem}
Determine whether or not $W_k$ is sc-greedy for $k=5,\ldots,10$.
\end{problem}

We summarize this information in the following theorem.
\begin{theorem}
Let $BW_k$ be a broken wheel and $W_k$ be a wheel.  The following is known:
\begin{enumerate}
\item if $k\le9$, then $BW_k$ is sc-greedy \cite{Heinold06},
\item if $k\ge10$, then $BW_k$ is not sc-greedy \cite{Heinold06},
\item if $k\le3$, then $W_k$ is sc-greedy, and
\item if $k=4$ or $k\ge11$, then $W_k$ is not sc-greedy.
\end{enumerate}
\end{theorem}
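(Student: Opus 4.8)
The plan is to treat the four parts separately, since this is a summary theorem and most parts either cite Heinold or reuse results established earlier in the paper. Parts (1) and (2) are precisely the statements that $BW_k$ is sc-greedy exactly for $k\le 9$, proved by Heinold in \cite{Heinold06}, so I would simply cite those. The remaining content concerns wheels, and I would handle the small and large cases by entirely different mechanisms.

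For part (3), I would note that the only substantive small wheel is $W_3$. Its rim cycle $v_1v_2v_3$ together with the hub $v_0$ adjacent to all three rim vertices is exactly the complete graph on four vertices, so $W_3\cong K_4$. Since complete graphs are sc-greedy \cite{Isaak04}, $W_3$ is sc-greedy; any degenerate values $k<3$ fall outside the cycle-based definition of $W_k$ and are trivial or vacuous. For part (4) with $k=4$, I would invoke Section \ref{sec:five}: the graph $W_4$ is the graph $G5.8$, which was shown there to satisfy $\chi_{SC}(W_4)=12<13=GB(W_4)$, hence is not sc-greedy.

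For part (4) with $k\ge 11$, the strategy is to exhibit $BW_{10}$ as an induced subgraph of $W_k$ and then apply Corollary \ref{cor:sub}. Concretely, I would select the hub $v_0$ together with ten consecutive rim vertices $v_1,\ldots,v_{10}$. In the resulting induced subgraph, $v_0$ is adjacent to each $v_i$, and the rim contributes only the path edges $v_1v_2,\ldots,v_9v_{10}$. The crucial point is that $v_1$ and $v_{10}$ are \emph{not} adjacent: on the rim $v_1$ is adjacent only to $v_2$ and $v_k$, and the hypothesis $k\ge 11$ forces $v_k\ne v_{10}$, so no chord $v_1v_{10}$ is inadvertently included. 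Hence the induced subgraph is precisely the fan $BW_{10}=W_{10}-v_1v_{10}$, which is not sc-greedy by part (2), and Corollary \ref{cor:sub} then yields that $W_k$ is not sc-greedy.

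The only step demanding genuine care is this induced-subgraph verification for $k\ge 11$: one must confirm that selecting ten consecutive rim vertices produces the fan and not a subgraph with an extra rim chord. This is exactly where the bound $k\ge 11$ (rather than $k\ge 10$) is used, and it is consistent with the fact that the range $5\le k\le 10$ is left open in the preceding Problem. Everything else reduces to bookkeeping over previously cited or established facts.
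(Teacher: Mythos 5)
Your proposal is correct and follows essentially the same route as the paper: cite Heinold for the broken-wheel parts, identify $W_3\cong K_4$ and invoke sc-greediness of complete graphs, use the Section~\ref{sec:five} computation $\chi_{SC}(W_4)=12<13$ for $k=4$, and for $k\ge11$ find $BW_{10}$ as an induced subgraph (hub plus ten consecutive rim vertices) and apply Corollary~\ref{cor:sub}. Your only addition is spelling out the verification that no rim chord $v_1v_{10}$ appears when $k\ge11$, a detail the paper leaves implicit.
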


Some general observations may be made about what remains to be done.  First, for any $k$, removing an arbitrary vertex of $W_k$ will yield either a broken wheel or a cycle.  Thus, for $k=5,\ldots,10$, the graph $W_k-v$ is sc-greedy for all $v\in V(W_k)$.  To determine whether or not $W_k$ is sc-greedy, it must be determined whether or not there exists a non-simple choice function for $W_k$ of size $GB(W_k)-1=3k$.

\section{Trees of cycles}
In this section we show that trees of cycles are sc-greedy.  More specifically, we prove that paths of cycles are sc-greedy and the result for trees of cycles follows as a corollary.

Let $G$ be a path of $k$ cycles.
Recall that $G$ can be embedded in the plane so that the weak dual of $G$ is a path of length $k$.
For $i=2,\ldots,k$, let $L_i$ be the subgraph of $G$ induced by the vertices of $G_1,\ldots,G_{i-1}$, let $R_i$ be the subgraph of $G$ induced by the vertices of $G_i,\ldots,G_k$, and let $I_i=G[\{t_i,b_i\}]$.
Note that the greedy bound for $G$ is equal to $2\sum_{i=1}^k a_i-3(k-1)$.  

\begin{theorem}\label{thm:cyclepaths}
Paths of cycles are sc-greedy.
\end{theorem}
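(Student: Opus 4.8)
The plan is to argue by induction on the number $k$ of cycles, taking the single cycle $C_{a_1}$ as the base case (cycles are already known to be sc-greedy). For the inductive step I would invoke Corollary~\ref{main}(1): to conclude that a path of $k$ cycles $G$ is sc-greedy it suffices to establish (i) that $G-v$ is sc-greedy for every $v\in V(G)$, and (ii) that $\tau(G)\ge GB(G)$, i.e. that no \emph{non-simple} choice function of size $GB(G)-1$ exists. The degree structure makes the non-simple functions very rigid: the only vertices of degree greater than $2$ are the $2(k-1)$ shared vertices $t_i,b_i$, each of degree exactly $3$, so a non-simple $f$ has $f(v)=2$ at every non-shared vertex and $f(v)\in\{2,3\}$ at every shared vertex. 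The hypothesis $a_i\ge4$ guarantees that each cycle has at least two non-shared (size-$2$) vertices, which is what makes the constraint-propagation in (ii) possible.

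For part (i) I would exploit the block structure. A path of cycles is $2$-connected, but deleting a vertex $v$ creates cut-vertices: deleting a non-shared vertex opens exactly one cycle into a path, while deleting a shared vertex opens the two incident cycles. In either case every block of $G-v$ is a single cycle, a shorter path of cycles (a sub-union $L_i$ or $R_j$), or a path; the pendant arcs produced by opening a cycle are handled by Corollary~\ref{cor:plus1}. Each block is sc-greedy by the inductive hypothesis together with the known results for cycles, paths, trees, and cycles with pendant paths, so Lemma~\ref{lem:block} gives that $G-v$ is sc-greedy. This step is routine once the block decomposition is recorded for each type of deleted vertex.

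Part (ii) is the crux, and I would begin with the bookkeeping. Since $GB(G)=2\sum_i a_i-3(k-1)$ and $|E|-|V|=k-1$, the all-$2$ function has size $2|V|=GB(G)-(k-1)$; hence a non-simple function of size $GB(G)-1$ arises from the all-$2$ function by raising exactly $k-2$ shared vertices from $2$ to $3$. As each shared vertex lies on exactly one of the $k-1$ shared edges, the pigeonhole principle yields at least one shared edge $t_ib_i$ both of whose endpoints retain list size $2$. My plan is then to build a non-colorable $f$-assignment by propagation: on a path of size-$2$ vertices with all lists $\{1,2\}$ there are exactly two proper colorings, so each cycle's arc transmits an equality/inequality constraint between the colors on its shared edges, exactly as in the proof that an odd cycle is not $2$-choosable. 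I would sweep along the weak-dual path, using $L_i$, $R_i$, and $I_i=G[\{t_i,b_i\}]$ to carry an invariant describing which color pairs on $(t_i,b_i)$ are ruled out, and invoke the color-forcing available on sc-greedy subgraphs (Lemma~\ref{lem:vertforce}) to pin the colors needed to close off any proper coloring. The base of this analysis is the two-cycle case, the theta graph $\Theta_{0,a_1-2,a_2-2}$; having a path with no internal vertex, it is not of the form $\Theta_{1,1,2m+1}$ in the Erd\H{o}s--Rubin--Taylor characterization of $2$-choosable graphs, so the all-$2$ function already fails to be a choice function there.

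The main obstacle is precisely this construction in part (ii): one must produce a bad $f$-assignment uniformly over \emph{all} choices of which $k-2$ shared vertices receive list size $3$. The difficulty is that each shared vertex raised to size $3$ grants the coloring an extra degree of freedom that can absorb a propagated constraint, so the equalities and inequalities must be routed through the shared edges whose endpoints are both size $2$ and combined so that some cycle is forced into an odd-cycle-type conflict with no available color. Showing that these propagated constraints cannot all be satisfied simultaneously, regardless of the placement of the size-$3$ vertices, is the technical heart of the argument, and is where I expect the bulk of the case analysis — and the careful use of $L_i$, $R_i$, and $I_i$ — to lie.
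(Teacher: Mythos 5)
Your setup is sound and, in fact, parallels the paper's: the induction on $k$, the block-decomposition argument giving that $G-v$ is sc-greedy for every $v$ (via the inductive hypothesis, pendant paths, and Lemma \ref{lem:block}), and the reduction via Corollary \ref{main}(1) to ruling out a non-simple choice function of size $GB(G)-1$ are all correct. Your counting step is also correct and is a clean repackaging of the paper's case split: since every non-shared vertex must have list size exactly $2$ and the shared vertices have degree $3$, a non-simple function of size $GB(G)-1$ raises exactly $k-2$ of the $2(k-1)$ shared vertices to size $3$, so some shared edge $t_jb_j$ has $f(t_j)=f(b_j)=2$. The gap is everything after that. The heart of the theorem is exhibiting a non-colorable $f$-assignment for such an $f$, and this is precisely the part you leave as a plan; moreover, the plan you sketch (propagating equality/inequality constraints along the weak-dual path, with a case analysis over all placements of the size-$3$ vertices) is exactly the difficulty the actual proof is engineered to avoid. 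You correctly identify that size-$3$ vertices can absorb propagated constraints, but you do not show the propagation can be closed uniformly, and Lemma \ref{lem:vertforce} will not rescue it by itself: it forces a color on a single vertex, whereas you need to control the pair $(t_j,b_j)$ jointly.

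The missing idea is a second application of the inductive hypothesis, now used in the negative direction. Restrictions of choice functions to induced subgraphs are choice functions, so $\size(f_{L_j})\ge\chi_{SC}(L_j)=GB(L_j)$ and $\size(f_{R_j})\ge GB(R_j)$ by induction; combined with $\size(f)=GB(G)-1$ and $f(t_j)=f(b_j)=2$, an exact count forces both inequalities to be equalities. Now decrement by one the list size of the neighbor of $t_j$ and of the neighbor of $b_j$ inside $L_j-\{t_j,b_j\}$, and likewise inside $R_j-\{t_j,b_j\}$. The resulting size functions $g_1,g_2$ fall strictly below $\chi_{SC}(L_j-\{t_j,b_j\})$ and $\chi_{SC}(R_j-\{t_j,b_j\})$ (these graphs are shorter paths of cycles with pendant paths, hence sc-greedy by induction and Corollary \ref{cor:plus1}), so there exist assignments $L_L,L_R$ from which they cannot be properly colored. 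Glue them: put the same two fresh colors $\{c_1,c_2\}$ on both $t_j$ and $b_j$, use $L_L$ and $L_R$ on the two sides, append $c_1$ to the $L_j$-neighbor of $t_j$ and $c_2$ to the $L_j$-neighbor of $b_j$, and append $c_2$ to the $R_j$-neighbor of $t_j$ and $c_1$ to the $R_j$-neighbor of $b_j$. Any proper coloring must assign $\{c_1,c_2\}$ to $t_j,b_j$ in one of the two orders; the order $(c_1,c_2)$ strips the appended colors from the $L_j$ side, leaving it with the uncolorable $L_L$, and the order $(c_2,c_1)$ does the same to the $R_j$ side. This defeats $f$ no matter where the $k-2$ size-$3$ vertices sit, because the inductive hypothesis absorbs the entire interior structure of $L_j$ and $R_j$ --- which is exactly the information your propagation scheme would have to reconstruct by hand. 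Without this step (or a completed substitute for it), the proposal does not prove the theorem.
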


\begin{proof}
The proof is by induction on $k$.  The result clearly holds for $k=1$, as cycles are known to be sc-greedy.
Now assume paths of at most $m$ cycles are sc-greedy for all $m<k$.  It will be shown that the result holds for $k$.  

Assume $f$ is a minimal choice function for a graph $G$ which is a path of $k$ cycles.  The proof requires a case analysis on $f(t_i)+f(b_i)$ for $2\le i\le k$.  Observe here that $f(t_i)+f(b_i)\ge3$ as $I_i$ is isomorphic to $P_2$ and $\chi_{SC}(P_2)=3$.

Assume first that $f(t_i)=1$.
Now, in $G-t_i$ the vertex $b_i$ will be a cut-vertex that splits $G-t_i$ into two shorter paths of cycles, perhaps with pendant paths attached to them.  By induction, shorter paths of cycles are sc-greedy, hence attaching any pendant paths will also yield an sc-greedy graph.  Thus, $G-t_i$ is sc-greedy.\\

Furthermore, assume next there is a $j$ for which there exists $w_j\in V(G_j)$ such that $f(w_j)=1$, then in $G-w_j$ there is a cut-vertex that splits $G-w_j$ into two shorter paths of cycles, perhaps with pendant paths attached to them.  By induction, shorter paths of cycles are sc-greedy, hence attaching any pendant paths will also yield an sc-greedy graph.  Thus, $G-w_j$ is sc-greedy.\\

It follows that $G-v$ is sc-greedy for all $v\in V(G)$.  Thus, to show that $G$ is sc-greedy, it remains to show there does not exist a non-simple size function of size $GB-1$ for $G$.  So we assume that $f(v)>1$ for all $v\in V(G)$ for the remainder of the proof.\\

\noindent Case 1: $f(t_i)+f(b_i)\ge5$ for all $i=2,\ldots,k$.\\
Recall that $f(w_i)\ge2$ for all $w_i\ne t_i,b_i$.  So
\begin{eqnarray*}
  \size(f) &\ge& 2(a_1-2)+2\displaystyle\sum_{i=2}^{k-1}(a_i-4)+2(a_k-2)+5(k-1) \\
   &=& 2\displaystyle\sum_{i=1}^k a_i-3(k-1) \\
   &=& GB(G)
\end{eqnarray*}
and the result follows.\\

\noindent Case 2: There is a $j$ such that $f(t_j)=f(b_j)=2$.\\
Assume $\size(f)=GB(G)-1=2\displaystyle\sum_{i=1}^ka_i-3(k-1)-1$.

By induction,
\begin{eqnarray*}
  \chi_{SC}(L_j)  &  =  &  2\displaystyle\sum_{i=1}^{j-1}a_i-3(j-2), \\
  \chi_{SC}(R_j)  &  =  &  2\displaystyle\sum_{i=j}^ka_i-3(k-j),
\end{eqnarray*}
so it follows that
\begin{eqnarray*}
  \size(f_{L_j-\{t_j,b_j\}})  &  \ge  &  2\displaystyle\sum_{i=1}^{j-1}a_i-3(j-2)-4, \\
  \size(f_{R_j-\{t_j,b_j\}})  &  \ge  &  2\displaystyle\sum_{i=j}^ka_i-3(k-j)-4.
\end{eqnarray*}
In fact, the above inequalities must be equalities so that $\size(f_{L_j-\{t_j,b_j\}})+4+\size(f_{R_j-\{t_j,b_j\}})=\size(f)$ holds.
This allows for an $f$-assignment $L$ for which $G$ is not $L$-colorable to be defined as follows:

Let $g_1$ be a size function for $L_j-\{t_j,b_j\}$ such that
$$g_1(w) = \left\{
\begin{array}{lr}
f(w)-1  &  \text{if } w\sim t_j\text{ or }b_j\\
f(w)  &  \text{else}
\end{array}
\right.$$
and let $g_2$ be a size function for $R_j-\{t_j,b_j\}$ such that
$$g_2(w) = \left\{
\begin{array}{lr}
f(w)-1  &  \text{if } w\sim t_j\text{ or }b_j\\
f(w)  &  \text{else}
\end{array}
\right..$$
Thus $\size(g_1)<\chi_{SC}(L_j-\{t_j,b_j\})\le\size(f_{L_j-\{t_j,b_j\}})$ and $\size(g_2)<\chi_{SC}(R_j-\{t_j,b_j\})\le\size(f_{R_j-\{t_j,b_j\}})$, implying $L_j-\{t_j,b_j\}$ and $R_j-\{t_j,b_j\}$ are not $g_1$- and $g_2$-choosable, respectively.
There are $g_1$- and $g_2$-assignments $L_L$ and $L_R$, respectively, for which $L_j-\{t_j,b_j\}$ and $R_j-\{t_j,b_j\}$ cannot be list-colored.

Let $L$ be an $f$-assignment for $G$ defined as follows:  $L(b_j)=L(t_j)=\{c_1,c_2\}$ where $c_1$ and $c_2$ do not appear in any of the lists $L_L$ and $L_R$, $L=L_L$ on $L_j-\{t_j,b_j\}$ and $L=L_R$ on $R_j-\{t_j,b_j\}$, except append $c_1,\,c_2,\,c_2$ and $c_1$ to the lists of neighbors of $t_j$ and $b_j$ in $L_j$ and $R_j$, respectively.  If $c$ is a proper $L$-coloring of $G$, then either $c(t_j)=c_1$ and $c(b_j)=c_2$ or $c(t_j)=c_2$ and $c(b_j)=c_1$.

By the construction of $L$, this coloring will not provide a proper coloring of either $L_j$ or $R_j$, a contradiction.\\

\noindent Therefore, paths of cycles are sc-greedy.
\end{proof}

We now observe that Theorem \ref{thm:cyclepaths} extends to trees of cycles. 
Let $G$ be a tree of cycles. 
Recall that $G$ can be embedded in the plane so that the weak dual, call it $G'$, of $G$ is a tree on $k$ vertices.

Let $\mathcal{I}=\{\{i,j\}:V(G_i)\cap V(G_j)\neq\emptyset\}$.  Then $|\mathcal{I}|=|E(G')|$.  In particular, the number of pairs of cycles in $G$ that share vertices is equal to the number of edges in the weal dual of $G$.   This allows us to compute the greedy bound of $G$ as
$$GB(G)=2\sum_{i=1}^ka_i-3|\mathcal{I}|=2\sum_{i=1}^ka_i-3|E(G')|.$$
It then follows as a corollary to Theorem \ref{thm:cyclepaths} that trees of cycles are sc-greedy:

\begin{corollary}\label{cor:cycletrees}
Trees of cycles are sc-greedy. 
\end{corollary}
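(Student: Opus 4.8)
The plan is to run the same induction on the number of cycles $k$ that drives the proof of Theorem \ref{thm:cyclepaths}, now allowing the weak dual $G'$ to be an arbitrary tree rather than a path. The base case $k=1$ is a single cycle, which is sc-greedy. For the inductive step I assume every tree of fewer than $k$ cycles is sc-greedy and let $G=\bigcup_{i=1}^k G_i$ be a tree of $k$ cycles with weak dual $G'$, so that $|\mathcal{I}|=|E(G')|=k-1$ and $GB(G)=2\sum_{i=1}^k a_i-3(k-1)$. For a pair $\{i,j\}\in\mathcal{I}$ I write $\{u,v\}$ for the shared edge; by condition (3) of the definition these shared pairs are pairwise disjoint, so $G$ has exactly $2(k-1)$ shared vertices and $\sum_{i=1}^k a_i-4(k-1)$ non-shared vertices.

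First I would establish, exactly as in the theorem, that $G-w$ is sc-greedy for every $w\in V(G)$: deleting any vertex $w$ destroys each cycle through $w$, and the resulting graph decomposes at the former shared vertices into smaller trees of cycles with pendant paths attached, which are sc-greedy by the induction hypothesis together with Corollary \ref{cor:plus1} and Lemma \ref{lem:block}. With $G-w$ sc-greedy for all $w$, Corollary \ref{main}(1) (equivalently Lemma \ref{greedy}) reduces the problem to ruling out a non-simple size function of size $GB(G)-1$, so I may assume $f(w)>1$ for every vertex and run the case analysis on the sums $f(u)+f(v)$ over shared edges. In Case 1, where $f(u)+f(v)\ge5$ for every shared edge, I recount for the tree: each of the $\sum a_i-4(k-1)$ non-shared vertices contributes at least $2$ and each of the $k-1$ shared pairs contributes at least $5$, so
\begin{align*}
  \size(f) &\ge 2\Big(\textstyle\sum_{i=1}^k a_i-4(k-1)\Big)+5(k-1) \\
           &= 2\textstyle\sum_{i=1}^k a_i-3(k-1)=GB(G),
\end{align*}
which is the uniform analogue of the endpoint/interior split of the path case and goes through because the shared pairs are disjoint.

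In Case 2 there is a shared edge $\{u,v\}$, common to cycles $G_a$ and $G_b$, with $f(u)=f(v)=2$. Here I delete the corresponding edge of $G'$; since $G'$ is a tree this splits it into exactly two subtrees, whose cycles induce subgraphs $L$ and $R$ with $V(L)\cap V(R)=\{u,v\}$. Removing $u,v$ breaks the unique cycle they lie on in each piece, so $L-\{u,v\}$ and $R-\{u,v\}$ are trees of fewer cycles with pendant paths, hence sc-greedy by induction, and $GB(L)+GB(R)$ accounts for $GB(G)$ together with the two vertices $u,v$. I then copy the theorem's construction: place fresh colors $\{c_1,c_2\}$ on both $u$ and $v$ so that they must receive $c_1,c_2$ in some order, choose size functions that are one short of choosability on $L-\{u,v\}$ and $R-\{u,v\}$, and append $c_1,c_2$ to the appropriate neighbors so that neither extension can be completed. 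This produces an $f$-assignment that is not colorable, contradicting $\size(f)=GB(G)-1$.

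I expect the main obstacle to be the bookkeeping in Case 2: verifying that the two pieces obtained by deleting a shared edge of $G'$ are again trees of cycles satisfying all six defining conditions — in particular the planarity conditions (5) and (6) on the weak dual and the outer face — that their greedy bounds combine to give $GB(G)$ plus the contribution of $u$ and $v$, and that the disjointness in condition (3) guarantees the fresh colors and the two split sub-assignments interact only through $u$ and $v$. Once these structural facts are in place, the coloring construction is identical to the path-of-cycles case, and Corollary \ref{cor:cycletrees} follows.
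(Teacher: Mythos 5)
Your proposal is correct and follows essentially the same route as the paper: the paper proves Corollary \ref{cor:cycletrees} by observing that the induction and case analysis of Theorem \ref{thm:cyclepaths} apply verbatim to the shared edges $V(G_i)\cap V(G_j)$, which is exactly what you carry out, with the right adapted count in Case 1 (disjointness of shared pairs gives $2\bigl(\sum a_i-4(k-1)\bigr)+5(k-1)=GB(G)$) and the right splitting in Case 2 (deleting a weak-dual edge yields $L$ and $R$ with $V(L)\cap V(R)=\{u,v\}$, where $u,v$ have degree $2$ in each piece, so the non-colorable list construction transfers unchanged). Your extra verification that $G-w$ and $L-\{u,v\}$ decompose into blocks that are smaller trees of cycles and edges, handled by Lemma \ref{lem:block} and the induction hypothesis, is precisely the bookkeeping the paper leaves implicit.
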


This result follows from Theorem \ref{thm:cyclepaths} and its proof because the argument and case analysis is applied to the intersection of $G_i$ with $G_j$ and the same properties will hold.  This leads us to ask the following questions:

\begin{question}
Are cycles of cycles sc-greedy?
\end{question}
\begin{question}
Are paths of cliques sc-greedy?
\end{question}
\begin{question}
If two sc-greedy graphs are joined by an edge, is the resulting graph sc-greedy?
\end{question}

\section{Acknowledgements}
The authors wish to thank Steve Butler for his contributions, including the work he did in Sage to help with the case analysis for graphs on five vertices.


\begin{thebibliography}{99}
\bibitem{BBBD06} Berliner, A., Bostelmann, U., Brualdi, R. A., Deatt, L.: Sum list coloring graphs.  Graphs Combin. 22 no. 2, 173--183 (2006)

\bibitem{ERT} Erd\H{o}s, P., Rubin, A. L., Taylor, H.: Choosability in graphs.  Proceeding of the West Coast Conference on Combinatorics, Graph Theory and Computing, Congress Numer. 27, Utiliitas Math., 125--157 (1980) 

\bibitem{FK09} F\"{u}redi, Z., Kantor, I.: List coloring with distinct list sizes, the case of complete bipartite graphs.  European Conference on Combinatorics, Graph Theory and Applications, Electron. Notes Discret. Math. 34, 323--327 (2009)


\bibitem{Heinold07} Heinold, B.: A survey of sum list coloring.  Graph Theory Notes N. Y. 52, 38--44 (2007)

\bibitem{Heinold12} Heinold, B.: The sum choice number of $P_3\Box P_n$.  Discret. Appl. Math. 160, 1126--1136 (2012)

\bibitem{Heinold09} Heinold, B.: Sum choice numbers of some graphs.  Discret. Math. 309 no. 8, 2166-2173 (2009)

\bibitem{Heinold06} Heinold, B.: Sum list coloring and choosability. Ph.D. Thesis, Lehigh University, 88 pp. (2006)

\bibitem{Isaak02} Isaak, G.: Sum list coloring $2\times n$ arrays.  Electron. J. Combin.  9 no. 1 Note 8, 7 pp. (2002)

\bibitem{Isaak04} Isaak, G.: Sum list coloring block graphs.  Graphs Combin. 20 no. 4, 499--506 (2004)

\bibitem{Lastrina12} Lastrina, M.: List-coloring and sum-list-coloring problems on graphs.  Ph.D. Thesis, Iowa State University, 127 pp. (2012)
\end{thebibliography}
\end{document}